\documentclass[11pt,a4paper]{article}
\usepackage{cite}
\usepackage{amsmath}
\usepackage{cases}

\usepackage{epsf,epsfig,amsfonts,amsgen,amsmath,amstext,amsbsy,amsopn,amsthm
}
\usepackage{enumerate}
\usepackage{amsmath}
\usepackage{amsfonts,amsthm,amssymb}
\usepackage{amsfonts}
\usepackage{graphics}
\usepackage{latexsym,bm}
\usepackage{amsfonts,amsthm,amssymb,bbding}
\usepackage{indentfirst}
\usepackage{graphicx}
\usepackage{color}
\usepackage{moresize}
\usepackage{array}
\usepackage{multirow}
\usepackage{graphicx}
\usepackage{threeparttable}
\usepackage{makecell}
\renewcommand\proofname{\bf Proof}
\newtheorem{theorem}{Theorem}

\newtheorem{lemma}{Lemma}[section]

\newtheorem{false statement}{False statement}

\theoremstyle{definition}

\newtheorem{claim}{Claim}

\newtheorem{remark}[claim]{Remark}

\newtheorem{problem}{Problem}

\usepackage{amsmath}
\allowdisplaybreaks[4]
\begin{document}

\title{\bf\Large Spectral radius, toughness and $k$-factor of graphs}
\author{\Large Yuanyuan Chen$^{a,b}$, Huiqiu Lin$^{a,b}$\thanks{Corresponding author;
Email addresses:
chenyy\underline{\ }de@sina.com (Y. Chen), huiqiulin@126.com (H. Lin), shuchengli666@163.com (S. Li).}, Shucheng Li$^{b}$ \\
\small\it $^{a}$  School of Mathematics, East China University of Science and Technology, \\
\small\it   Shanghai 200237, China\\[1mm]
\small\it $^b$ College of Mathematics and System Science, Xinjiang University,\\
\small\it  Urumqi, Xinjiang 830017, China\\[1mm]}
\date{}

\maketitle

\begin{center}
\begin{minipage}{140mm}
\begin{center}{\bf Abstract}\end{center}
{A $k$-regular spanning subgraph of $G$ is called a $k$-factor.
Fan, Lin and Lu [European J. Combin. 110 (2023) 103701] presented a tight sufficient condition
in terms of the spectral radius for a connected 1-tough graph to contain a connected 2-factor (Hamilton cycle).
Then it is interesting to consider the following problem: What is the spectral radius condition to guarantee the existence of
a $k$-factor with $k\ge3$ in a connected 1-tough graph $G$ with $\delta(G)\ge k$?
In this paper, we completely solve this problem.

AMS Classification: 05C42, 05C50\\
Keywords: Spectral radius, Factor; Toughness
}
\end{minipage}

\end{center}

\renewcommand{\thefootnote}{}
\vskip 0.3in

\section{Introduction}
Throughout this paper, we consider only finite, undirected and simple connected graphs. For a vertex $v\in V(G)$, let $N_G(v)$ and  $d_{G} (v)$ be the neighborhood and degree of $v$ in $G$, respectively.
The largest eigenvalue of $A(G)$, denoted by $\rho(G)$, is called the \emph{spectral radius} of $G$.
Given two graphs $G_1$ and $G_2$, the \textit{disjoint union} $G_1\cup G_2$ is the graph with vertex set $V(G_1)\cup V(G_2)$ and edge set $E(G_1)\cup E(G_2)$,
and the \textit{join}  $G_{1} \vee G_{2}$ is the graph obtained from  $G_{1}\cup G_{2}$ by adding all edges between $G_{1}$ and $G_{2}$.

An $[a, b]$-factor of a graph $G$ is a spanning subgraph $H$ such that $a\le d_H(v)\le b$ for each $v\in V(G)$.
In particular, for a positive integer $k$, a $[k,k]$-factor is called a $k$-factor.
The initial study of factors was due to Danish mathematician Petersen \cite{J.Petersen} in 1891. After that, many researches have been conducted on this topic.
There are abundant achievements of studying the existence of factors in graphs from the spectral perspective. Brouwer
and Haemers \cite{Brouwer1} initiated the research of establishing sufficient conditions for a regular graph to have a perfect matching in
terms of the third largest adjacency eigenvalue. Their result was improved in \cite{S.Cioab1, S.Cioab2, S.Cioab3} and extended in \cite{H.Lu1, H.Lu2} to obtain a regular factor.
Recently, the researchers have also paid much attention to the existence of factors in a graph from the prospective of the
spectral radius, which can be seen in
\cite{Brouwer1, Barik, S.Cioab1, S.Cioab2, S.Cioab3, E.Cho, Y.Cui, M.Ellingham, D.Fan3, Gu2, Kano1, Kim, H.Lu1, H.Lu2, H.Lu3, J.Wei} and the references therein, among others.

Chv\'atal \cite{V.C} defined the \textit{toughness} of a non-complete graph $G$ as
$t(G)=\min\{\frac{|S|}{c(G-S)}: S\subset V(G) ~\textrm{and}~ c(G-S)>1\},$ where $c(G-S)$ denotes the number of components of $G-S$.
A graph $G$ is $t$-tough if $|S|\ge tc(G-S)$ for every $S\subseteq V(G)$ with $c(G-S)>1$.
Many researchers also focused on the existence of factors in a graph from the prospective of toughness, see \cite{Alon, D.Bauer, Y.Cui, CFL241, CGL26, D.Fan2, R.Liu}.
A Hamiltonian cycle is a connected 2-factor of a graph.
Let $K^{+3}_{n-4}$ be the graph obtained from $3K_1\cup K_{n-4}$ by adding three independent edges between $3K_1$ and $K_{n-4}$, and let $M_n=K_1\vee K^{+3}_{n-4}.$
Fan, Lin and Lu \cite{D.Fan2} determined the spectral radius condition to guarantee the existence of a Hamiltonian cycle (connected 2-factor) among 1-tough graphs.
\begin{theorem}[{Fan and Lin \cite{D.Fan2}}]\label{tF2}
Suppose that $G$ is a connected 1-tough graph of order $n\ge 18$ with $\delta(G)\ge2$.
If $\rho(G)\ge\rho(M_n)$, then $G$ contains a Hamiltonian cycle, unless $G\cong M_n$.
\end{theorem}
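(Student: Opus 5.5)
The plan is the standard closure-plus-structure approach for spectral Hamiltonicity results.

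\emph{Step 1: reduce to a Bondy--Chv\'atal closure.} Suppose $G$ is not Hamiltonian. Replace $G$ by its closure $H=\mathrm{cl}_n(G)$, obtained by repeatedly joining nonadjacent vertices whose degree sum is at least $n$. Then $H$ is still non-Hamiltonian. Adding edges keeps $H$ 1-tough with $\delta(H)\ge 2$, and $\rho(H)\ge\rho(G)\ge\rho(M_n)$.

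\emph{Step 2: get a degree-sequence obstruction.} Since $H$ is non-Hamiltonian, Chv\'atal's degree condition fails. So there is an integer $1\le i<n/2$ such that $H$ has at least $i$ vertices of degree at most $i$ and at least $n-i$ vertices of degree less than $n-i$. In particular $e(H)$ is at most a quadratic function $f(i)=\binom{n-i}{2}+i^2$.

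\emph{Step 3: bound $i$.} Use the estimate $\rho(H)\le\sqrt{2e(H)-n+1}$ (Hong's bound). Compare it with a lower bound $\rho(M_n)>n-4$, which follows because $M_n$ contains $K_{n-3}$. For $n\ge 18$ this should force $i$ to be small. The case $i=1$ is excluded by $\delta\ge 2$, so $i\in\{2,3\}$ are the cases that survive.

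\emph{Step 4: the case $i=2$.} The two low-degree vertices have degree at most $2$. The closure then makes the remaining $n-2$ vertices a clique: their degrees are large, so their pairwise degree sums reach $n$. Now 1-toughness rules this case out. If both small vertices attach only to the same two vertices $S$, or if they share a neighbor, then removing $S$ (or a suitable small set) creates more than $|S|$ components. In the remaining configurations a Hamilton cycle can be constructed directly, a contradiction.

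\emph{Step 5: the case $i=3$.} Here there are three vertices of degree at most $3$ and the rest forms a clique. Toughness forbids these three vertices from having a common small neighborhood. A case analysis on their neighborhoods leaves either Hamiltonian graphs or subgraphs of $M_n$. In $M_n$ the three vertices $u_1,u_2,u_3$ are each adjacent to the apex and to distinct clique vertices. Every other extremal structure has spectral radius strictly less than $\rho(M_n)$, which I would show via the characteristic polynomial of an equitable quotient matrix or by eigenvector edge-switching arguments. Since $\rho(G)\ge\rho(M_n)$ and $G\subseteq H$, the Perron--Frobenius theorem gives $G=H=M_n$. Finally, $M_n$ is 1-tough and non-Hamiltonian: all three $u_i$ must use the apex in a Hamilton cycle, which is impossible.

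The main obstacle I expect is Step 5. It needs a precise comparison of spectral radii among the finitely many candidate closures. Each candidate has spectral radius approximately $n-4+O(1/n)$, so the comparison needs quotient-matrix characteristic polynomials rather than crude bounds. This is also where $n\ge 18$ gets used.
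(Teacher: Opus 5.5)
Your outline fails at the split between Steps 4 and 5, and it fails exactly at the extremal graph. (The paper only quotes this theorem from Fan, Lin and Lu and gives no proof, so I am judging your outline on its own.)

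Take $G=M_n$.
\begin{itemize}
\item Nonadjacent pairs in $M_n$ have degree sums $4$, $n-2$ or $n-1$, so $\mathrm{cl}_n(M_n)=M_n$.
\item Its degree sequence is $2,2,2$, then $n-7$ copies of $n-4$, three copies of $n-3$, and one $n-1$.
\item Hence Chv\'atal's condition fails at $i=2$ ($d_2=2$, $d_{n-2}=n-3<n-2$). It does not fail at $i=3$, since $d_{n-3}=n-3$, nor at any $i\ge4$.
\end{itemize}
So $H=M_n$ lies squarely in your Step 4. That step asserts that case $i=2$ always gives a toughness violation or a Hamilton cycle, which is false for this 1-tough non-Hamiltonian graph. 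Your Step 5 can never reach $M_n$.

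The faulty sentence is ``the closure then makes the remaining $n-2$ vertices a clique: their degrees are large.'' Nothing forces this. Hong's bound together with $\rho(H)>n-4$ only gives at most $3n-8$ non-edges. The two degree-$2$ vertices $x,y$ account for about $2n-6$ of them. The leftover budget of about $n-1$ non-edges inside $V(H)\setminus\{x,y\}$ is enough for a third vertex of degree $2$ or $3$; this is $u_3$ in $M_n$.

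To repair Step 4, first show that at most one further vertex $z\notin\{x,y\}$ has small degree and the others span a clique. The tools are the edge budget and the closure property (nonadjacent vertices of $H$ have degree sum $<n$). Then carry out the toughness and Hamiltonicity analysis on $N(x),N(y),N(z)$; this is where $M_n$ and its spanning subgraphs must come out. Case $i=3$ then has to be shown to add nothing.

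Step 3 also needs a patch, because Hong's bound does not exclude all $i\ge4$ near $n=18$.
\begin{itemize}
\item For $(n,i)=(18,8)$ one has $\sqrt{2f(8)-17}=\sqrt{201}\approx14.18$. This exceeds even the true value $\rho(M_{18})\approx14.06$, the largest root of $(\rho-n+4)(\rho^2+\rho-4)=12$.
\item $(19,9)$ behaves the same way.
\item $(18,4)$ and $(21,10)$ survive your cruder bound $\rho(M_n)>n-4$.
\end{itemize}
These cases are harmless. The edge count forces a nearly extremal degree sequence, and the closure then yields non-1-tough graphs, e.g. $K_8\vee(K_2\cup 8K_1)$ or $K_8\vee 10K_1$ for $(18,8)$. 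But you must argue this, since the spectral bound alone does not do it. With the true value of $\rho(M_n)$, Hong's bound disposes of $i=4$ exactly when $n\ge18$. So the hypothesis $n\ge18$ enters naturally in Step 3, not only in Step 5 as you expected.
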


Then it is interesting to consider the following problem:
\begin{problem}\label{k-tough}
What is the spectral radius condition to guarantee the existence of $k(\ge3)$-factor in connected 1-tough graph $G$ with $\delta(G)\ge k$?
\end{problem}
Suppose that $V((k+1)K_1)=\{v_1,\ldots, v_{k+1}\}$, $V(K_{n-2k-1})=\{w_1,\ldots, w_{n-2k-1}\}$.
Let $G^1_{n,3}=K_3\vee (4K_1\cup K_{n-7})+\{v_1w_1, v_2w_2\}$ as shown in Fig.\,1 (a),
and let $G_{n,k}=K_k\vee ((k+1)K_1\cup K_{n-2k-1})+\{v_1w_i|i=1,\ldots,k-2\}+\{v_2w_1\}$ as shown in Fig.\,1 (b).
\begin{figure}
    \centering
    \includegraphics[width=0.85\linewidth]{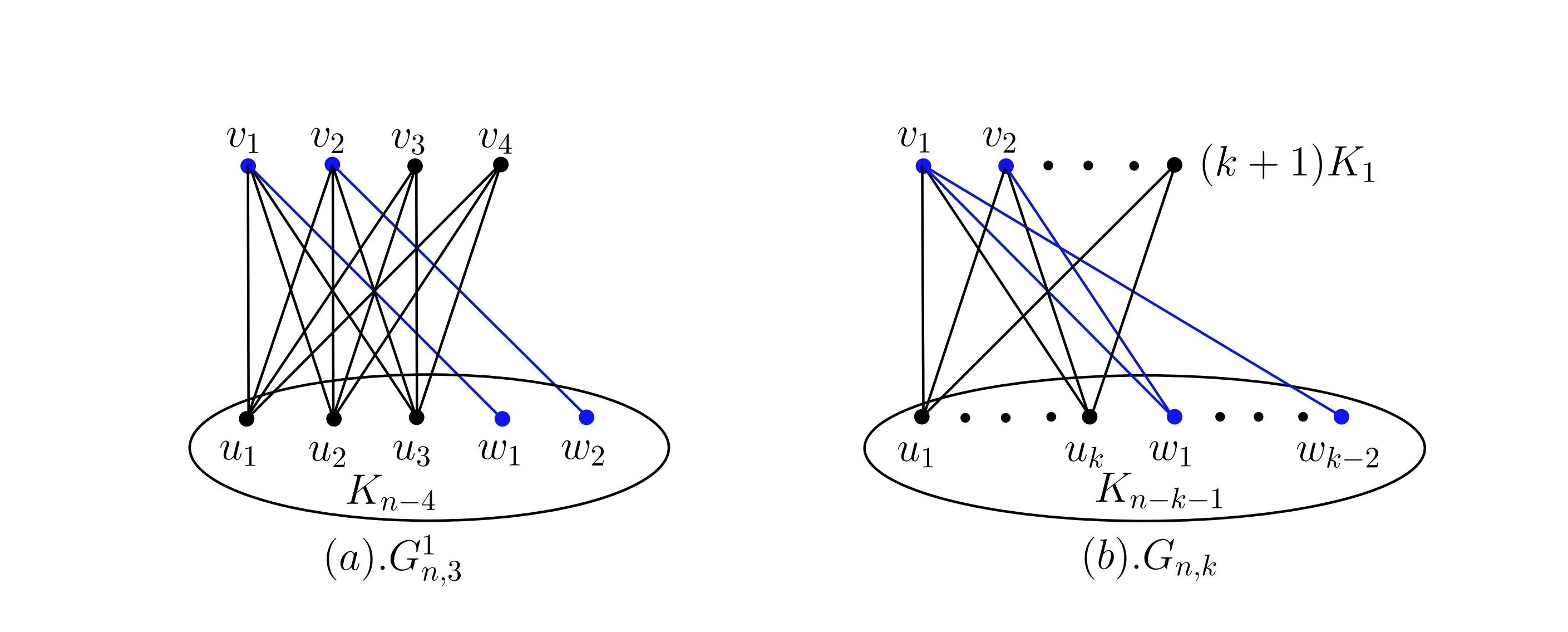}
\\$\mathbf{Fig 1.}$ Graphs $G_{n,k}^1$ and $G_{n,k}^2$
    \label{fig:placeholder}
\end{figure}
Concerning Problem \ref{k-tough}, we prove the following result.
\begin{theorem}\label{tc}
Let $G$ be a 1-tough connected graph of order $n \geq 13k^2-11k-1$ with minimum degree $\delta(G)\ge k$ and $k\ge 3$.
If $$\rho(G)\ge \left\{
  \begin{array}{ll}
    \rho(G^1_{n,3}), ~~ ~~~~~~~~~~~~~~~~~ \hbox{when $k=3$;}\\
    \rho(G_{n,k}), ~~ ~~~~~~~~~~~~~~~~~\hbox{when~$k\ge 4$,}
  \end{array}
\right.$$
then $G$ contains a $k$-factor, unless $G\cong G^1_{n,3}$ or $G\cong G_{n,k}$, see Fig\,1.
\end{theorem}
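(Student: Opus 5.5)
We argue by contradiction, using Tutte's $k$-factor theorem as the structural tool. Suppose $G$ satisfies the hypotheses and has no $k$-factor. By Tutte's theorem there are disjoint sets $S,T\subseteq V(G)$ with
\[
\delta(S,T)=k|S|-k|T|+\sum_{y\in T} d_{G-S}(y)-q(S,T)\le -2,
\]
where $q(S,T)$ counts the components $C$ of $G-(S\cup T)$ with $k|V(C)|+e(C,T)$ odd. (Parity gives $\delta\equiv kn \pmod 2$, hence $\delta\le-2$.) We choose $(S,T)$ with $|T|$ minimal among such pairs. Standard arguments then show that every $y\in T$ has $d_{G-S}(y)\le k-1$ and that $T$ is independent-like in the relevant sense.

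Set $s=|S|$, $t=|T|$, and let $q=q(S,T)$. The inequality gives
\[
q\ge k(s-t)+\sum_{y\in T}d_{G-S}(y)+2 .
\]
Each vertex $y\in T$ has $d_{G-S}(y)\ge \delta-s\ge k-s$, which bounds $t$ in terms of $s$ and $q$. We then use 1-toughness. Deleting $S$ together with suitable neighbours splits off the odd components and the vertices of $T$, so we get $c(G-S')\le |S'|$ for the corresponding separators $S'$. Combining this with the Tutte inequality should force $s\ge 1$ and show that $G-(S\cup T)$ has few components. More precisely, it yields a bound of the form $t+q\le s+O(k)$, and then $s\le k$ in the extremal regime.

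The spectral part comes next. By Perron--Frobenius monotonicity, $G$ is a spanning subgraph of a graph of the shape $K_s\vee(\text{stuff on }T\cup\text{components})$, in which edges are added to maximize $\rho$ while preserving 1-toughness and the Tutte violation. Concretely, $T$ together with the small components becomes a set of $k+1$ low-degree vertices attached to a big clique. First, if $s$ is large relative to $k$, or the structure leaves a large portion not in a big clique, we use the bound $\rho\le \sqrt{2m-n+1}$ (Hong) together with an edge count to get
\[
\rho(G)<n-k-1<\rho(G_{n,k}).
\]
This is where $n\ge 13k^2-11k-1$ is consumed. In the remaining case $s=k$, $t=k+1$, $T$ is independent, and $\sum_{y\in T}d_{G-S}(y)\le k-2$ edges leave $T$ toward the rest, with everything else forming $K_{n-2k-1}$. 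Each vertex of $T$ needs degree at least $k$ and the toughness condition must hold. The extremal candidates then differ only in how these $k-2$ (or, for $k=3$, the allowed) edges from $T$ to the clique are distributed. We compare spectral radii of the candidates via Perron vector arguments. Moving edges so that they concentrate on one vertex $v_1$ increases $\rho$ (the Kelmans-type transformation), except that 1-toughness and $\delta\ge k$ restrict the possibilities. This gives $G_{n,k}$ for $k\ge4$. For $k=3$ only one spare edge exists beyond the forced one, and the configuration with two disjoint edges $v_1w_1,v_2w_2$ (the graph $G^1_{n,3}$, which has $s=3$, $t=4$) is the maximizer.

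The main obstacle is this final comparison, together with checking that $G_{n,k}$ and $G^1_{n,3}$ are themselves 1-tough with no $k$-factor. Equality-type comparisons between nearly identical graphs require a careful estimate of Perron entries: entries on $T$ are about $\rho^{-1}$ times clique entries, and clique entries are close to each other. It also requires bounding $\rho(G_{n,k})$ below by $\rho(K_k\vee K_{n-2k-1})$ plus a positive correction, to beat all alternative structures. We plan to do this by writing the quotient matrix of the equitable partition, obtaining the characteristic polynomial, and comparing its values at the competitor's spectral radius.
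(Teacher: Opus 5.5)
Your outline has the paper's skeleton: Tutte's criterion with $\delta(S,T)\le -2$, saturation to $K_s\vee(\cdots)$, a spectral edge bound forcing $q\le 1$ and $t=k+1$, then a Perron-vector comparison. But the endgame, where the real content is, is wrong or missing.

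\textbf{The endgame.} There are three concrete problems.

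First, in the case $s=k$, $t=k+1$ the extremal graph has $q=1$, so Tutte's inequality gives $\sum_{y\in T}d_{G-S}(y)\le k-1$, not $k-2$; you dropped the $-q$. $G_{n,k}$ has exactly $k-1$ such edges ($k-2$ at $v_1$, one at $v_2$), so the family you propose to optimize over does not contain the answer.

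Second, nothing in your plan rules out $t=k+1$ with $1\le s\le k-1$. Tutte together with $\delta\ge k$ only yields $s(t-k)\ge 2-q$. Such graphs can be 1-tough, have $\delta\ge k$, have no $k$-factor, and have $\binom{n-k-1}{2}+k^2+k+s-1$ edges. For $s=k-1$ that is one fewer than $G_{n,k}$; an example is $v_1$ joined to $k-1$ clique vertices and every other vertex of $T$ joined to its own private clique vertex. No Hong-type edge count can exclude these graphs, and your edge-count step only treats large $s$, so they need a genuine spectral comparison with $G_{n,k}$. The paper also passes over this: it deduces $s=k$ from the equality case of $\sum_{v\in T}d_{G^*}(v)\le k(k+1)+s-1$ without saying why equality must hold. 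It is nonetheless a real step.

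Third, the Kelmans move you rely on leaves the class. For $s=k$, putting all $T$--$C_1$ edges on $v_1$ leaves $k$ isolated vertices in $G-S$, so the result is not 1-tough. The comparison has to be made inside the 1-tough family, and that is exactly the paper's lemma on $\mathcal{G}_{n,k}$. That lemma bounds the mixed form $Y^{T}(A(G_{n,k})-A(G))X$ from below via Facts 1 and 2. Before it applies, the edge-shifting lemma and Claim 8 ($a'\ge x_{v^*}$) are used to make the neighbourhoods nested and $T$ independent; your plan asserts independence of $T$ without this. A quotient-matrix computation does not replace the lemma, because there is one configuration per admissible degree sequence $d_1\ge d_2\ge\cdots$ with $d_1\ge 2$, $d_2\ge 1$, and these are unboundedly many as $k$ grows.

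\textbf{Smaller points.} The ``standard'' fact that minimal $|T|$ forces $d_{G-S}(y)\le k-1$ on $T$ is false for $k$-factors, because of the odd-component term. For $k=2$, join a vertex $y$ by single edges to three disjoint copies of $K_4$: every violating pair has $T\neq\emptyset$, the only violating pair with $|T|=1$ is $(\emptyset,\{y\})$, and $d(y)=3$. The paper instead takes $|S\cup T|$ maximal, which yields Claim 3 (odd components have at least $k$ vertices); that claim feeds the edge counts behind $q\le 1$.

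Your bound $t+q\le s+O(k)$ from toughness is unsupported. The paper gets $q\le 1$ and $t=k+1$ from degree and edge counting alone, using the $\delta$-dependent spectral bound. Hong's $\rho\le\sqrt{2m-n+1}$ only gives $e(G)>\binom{n-k-1}{2}+\frac{k+1}{2}$, versus $\binom{n-k-1}{2}+k+2$ in Claim 2. With the weaker threshold some of the paper's margins would have to be redone using $n\ge 13k^2-11k-1$, for example the case $t\ge 3k$ of Claim 7 when $k=3$.

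Finally, ``hence $\delta\le -2$'' needs $kn$ even. The paper assumes this tacitly as well, and it is necessary: for $kn$ odd no graph on $n$ vertices has a $k$-factor, so $K_n$ violates the statement.
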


\section{Preliminaries}
For $X,Y\subseteq V(G)$, we denote by $e_G(X,Y)$ the number of edges in $G$ with one endpoint in $X$ and one endpoint in $Y$.

\begin{lemma}[See \cite{Nosal}]\label{k-factor}
Let $k$ be a positive integer, and let $G$ be a graph. Then $G$ contains a $k$-factor if and only if
$$\delta_G(S, T)=k(|S|-|T|)+\sum_{v\in T}d_G(v)-e_G(S,T)-q_G(S,T)\ge0$$
for all disjoint subsets $S,T\subseteq V(G)$, where $q_G(S,T)$ is the number of the components $C$ of $G-(S\cup T)$ such that
$e_G(V(C),T)+k|V(C)|\equiv 1 ~(mod ~2)$. Moreover, $\delta_G(S,T)\equiv k|V(G)|~ (mod ~2)$.
\end{lemma}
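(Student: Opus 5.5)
This is Tutte's classical $k$-factor theorem, the special case $f\equiv k$ of the $f$-factor theorem, and the paper only cites it. If I were to prove it, I would first check the parity statement, then necessity, and then derive sufficiency from Tutte's 1-factor theorem through the standard gadget construction.

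\emph{Parity.} Put $R=V(G)\setminus(S\cup T)$. Then
\[
\sum_{v\in T}d_G(v)=2e_G(T)+e_G(S,T)+e_G(R,T),
\]
so $\sum_{v\in T}d_G(v)-e_G(S,T)\equiv e_G(R,T)\pmod 2$. Summing $e_G(V(C),T)+k|V(C)|$ over the components $C$ of $G-(S\cup T)$ gives
\[
q_G(S,T)\equiv e_G(R,T)+k|R|=e_G(R,T)+k(|V(G)|-|S|-|T|)\pmod 2 .
\]
Substituting these two congruences into the definition of $\delta_G(S,T)$ leaves $\delta_G(S,T)\equiv k|V(G)|\pmod 2$.

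\emph{Necessity.} Let $F$ be a $k$-factor. For each $v\in T$, exactly $d_G(v)-k$ edges at $v$ lie outside $F$. Write $k|S|$ as the number of $F$-edges leaving $S$, and compare this with the $F$-edges entering $T$. For each component $C$ counted by $q_G(S,T)$, the identity
\[
k|V(C)|=2e_F(C)+e_F(C,S)+e_F(C,T)
\]
forces $e_F(C,S)\ge 1$ or $e_F(C,T)<e_G(C,T)$. In other words, $C$ is charged either to an $F$-edge into $S$ or to a non-$F$ edge into $T$. Adding these charges gives $\delta_G(S,T)\ge 0$.

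\emph{Sufficiency.} This is the main step. I would build an auxiliary graph $H$ as follows.
\begin{itemize}
\item Each vertex $v$ is replaced by a set $A_v$ of $d_G(v)$ ``port'' vertices, one for each edge at $v$, together with a set $B_v$ of $d_G(v)-k$ ``inner'' vertices, each joined to all of $A_v$. If some $d_G(v)<k$, the condition fails directly with $S=\emptyset$, $T=\{v\}$.
\item Each edge $uv$ becomes an edge between the corresponding ports.
\end{itemize}
Then $G$ has a $k$-factor if and only if $H$ has a perfect matching. I would now apply Tutte's theorem to $H$: suppose some $X\subseteq V(H)$ has $o(H-X)>|X|$.

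The obstacle is to normalize such a barrier $X$ into the form induced by a pair $(S,T)$. I would take a barrier maximizing $o(H-X)-|X|$, and among those a maximal one. Then I would show two facts.
\begin{itemize}
\item For each $v$, either $A_v\subseteq X$ and $B_v\cap X=\emptyset$, which puts $v$ in $S$, or $B_v\subseteq X$ and $A_v\cap X=\emptyset$, which puts $v$ in $T$, or $X\cap(A_v\cup B_v)=\emptyset$, which puts $v$ in $R$. The proof is a local exchange argument: moving a gadget fully in or out of $X$ does not decrease the deficiency.
\item The odd components of $H-X$ correspond exactly to the components $C$ of $G-(S\cup T)$ with $e_G(C,T)+k|V(C)|$ odd, plus isolated ports of $T$-vertices adjacent to $S$ or $T$.
\end{itemize}
The remaining work is bookkeeping. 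Counting $|X|=\sum_{v\in S}d_G(v)+\sum_{v\in T}(d_G(v)-k)$ against the odd components, the deficiency $o(H-X)-|X|$ becomes exactly $-\delta_G(S,T)$. This contradicts the hypothesis $\delta_G(S,T)\ge 0$, so $H$ has a perfect matching and $G$ has a $k$-factor.

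Alternatively, one can cite Tutte's original alternating-path proof of the $f$-factor theorem and specialize it to $f\equiv k$.
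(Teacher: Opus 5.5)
The paper does not prove this lemma; it only quotes it. It is Tutte's $f$-factor theorem with $f\equiv k$, so the comparison has to be with the classical argument. Your parity computation is correct, and so is your necessity argument (each counted component $C$ forces $e_F(C,S)+e_{G-F}(C,T)\ge 1$). The gadget reduction to Tutte's 1-factor theorem is also the right route. However, two steps of the sufficiency sketch fail as written, and a third needs a convention.

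\emph{The extremal choice is backwards.} The normalizing exchanges only ever delete vertices from $X$:
\begin{itemize}
\item If $A_v\not\subseteq X$ and $B_v\not\subseteq X$, delete $X\cap(A_v\cup B_v)$ one vertex at a time. Each deleted vertex meets at most two components of $H-X$: the one containing $(A_v\cup B_v)\setminus X$, and that of its partner port.
\item If $A_v\subseteq X$, delete $B_v\cap X$. Each such vertex becomes an isolated vertex.
\item If $B_v\subseteq X$ but $A_v\not\subseteq X$, delete $A_v\cap X$. Each such port meets at most one component.
\end{itemize}
None of these moves lowers $o(H-X)-|X|$, but each shrinks $X$. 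So an inclusion-maximal maximum-deficiency barrier need not be normalized.

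For a concrete instance, take $k=1$ and let $G$ be a triangle $xyz$ with two pendant vertices attached at $x$. The barrier $A_x\cup B_y\cup B_z$ of $(S,T)=(\{x\},\{y,z\})$ has deficiency $1=\mathrm{def}(H)$. Adding $y$'s port on the edge $yz$ keeps the deficiency at $1$ and leaves only singleton components. The resulting $X$ is therefore inclusion-maximal, yet $B_y\subseteq X$ while only one of the two ports of $A_y$ lies in $X$. The fix is to take an inclusion-\emph{minimal} maximum-deficiency barrier, or simply to apply the deletions.

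\emph{Vertices of degree $k$ need a convention.} When $d_G(v)=k$, the set $B_v$ is empty and $A_v$ is independent in $H$, so the $T$- and $R$-conditions coincide. Such $v$ must be put in $T$, where it contributes $k-d_G(v)=0$. Otherwise the part of $H-X$ lying over a component of $G-(S\cup T)$ can split into several odd pieces.

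\emph{Your list of odd components is wrong, and with it the final identity fails.}
\begin{itemize}
\item For $v\in S$, the $d_G(v)-k$ vertices of $B_v$ are isolated, hence odd, components. These are exactly what produce the term $-k|S|$.
\item A port of a $T$-vertex whose edge goes to another $T$-vertex is not isolated. Together with its partner it forms an even $K_2$ component.
\end{itemize}
The correct count is
\[
o(H-X)=\sum_{v\in S}\bigl(d_G(v)-k\bigr)+e_G(S,T)+q_G(S,T).
\]
The last term comes from the components of $H-X$ spanned by the gadgets of a component $C$ of $G-(S\cup T)$ together with the $e_G(C,T)$ attached $T$-ports. Such a component has order $\sum_{v\in C}(2d_G(v)-k)+e_G(C,T)$, whose parity is that of $e_G(C,T)+k|V(C)|$. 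Combined with your value of $|X|$, this gives exactly $o(H-X)-|X|=-\delta_G(S,T)$.

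Your list instead gives $q_G(S,T)+e_G(S,T)+2e_G(T)$ odd components. In the example above that is $6$ rather than the true $7$, so the claimed identity does not follow from the bookkeeping you describe.
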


By the well-known Perron-Frobenius theorem (cf. \cite[Section 8.8]{C.Godsil}), we can easily deduce the following result.

\begin{lemma}\label{edge}
If $H$ is a spanning subgraph of a connected graph $G$, then
$\rho(H)\leq\rho(G),$
with equality if and only if $H\cong G$.
\end{lemma}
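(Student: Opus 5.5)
The statement is the monotonicity lemma: if $H$ is a spanning subgraph of a connected graph $G$, then $\rho(H)\le\rho(G)$, with equality if and only if $H\cong G$. The plan is to prove it directly from the Perron--Frobenius theorem, using the Rayleigh quotient characterization of the spectral radius of a symmetric nonnegative matrix.

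\textbf{The inequality.} Since $H$ is spanning, $V(H)=V(G)$ and $E(H)\subseteq E(G)$. Hence $A(G)-A(H)$ is a nonnegative symmetric $0/1$ matrix, and $A(H)\le A(G)$ entrywise. Let $y$ be a nonnegative unit eigenvector of $A(H)$ for $\rho(H)$; Perron--Frobenius gives such a $y$ even when $H$ is disconnected. Then
$$\rho(H)=y^{T}A(H)y\le y^{T}A(G)y\le \max_{\|z\|=1}z^{T}A(G)z=\rho(G).$$
The middle inequality holds because $y^{T}\bigl(A(G)-A(H)\bigr)y=2\sum_{uv\in E(G)\setminus E(H)}y_uy_v\ge 0$.

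\textbf{The equality case.} Suppose $\rho(H)=\rho(G)$. Then both inequalities above are equalities.
\begin{itemize}
\item Equality in the Rayleigh bound means $y$ attains the maximum of $z^{T}A(G)z$ over unit vectors, so $y$ is an eigenvector of $A(G)$ for $\rho(G)$.
\item Since $G$ is connected, $A(G)$ is irreducible. Perron--Frobenius then says the nonnegative eigenvector $y$ for $\rho(G)$ is entrywise positive.
\item Equality in the first step forces $\sum_{uv\in E(G)\setminus E(H)}y_uy_v=0$. Every term $y_uy_v$ is strictly positive, so $E(G)\setminus E(H)=\emptyset$.
\end{itemize}
Thus $H=G$. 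Conversely, $H\cong G$ trivially gives equality: the two graphs have the same vertex set, so isomorphic graphs have the same spectrum.

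The only delicate point is that $H$ itself may be disconnected. We therefore must not use positivity of $H$'s Perron vector. Instead we transfer $y$ to $G$ through the Rayleigh-quotient equality and apply irreducibility of $A(G)$ there.
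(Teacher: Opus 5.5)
Your proof is correct, and it takes essentially the paper's approach: the paper gives no argument beyond citing the Perron--Frobenius theorem (Godsil--Royle, \S 8.8). Your Rayleigh-quotient argument, with positivity of the nonnegative eigenvector of the irreducible matrix $A(G)$ used to force $E(G)\setminus E(H)=\emptyset$, is the standard way to deduce the lemma from that theorem, and you correctly avoid assuming $H$ is connected.
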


\begin{lemma}[See \label{degree}\cite{Y.Hong1, V.Nikiforov1}]
Let $G$ be a graph on $n$ vertices and $m$ edges with minimum degree $\delta(G)\ge1$. Then
$$\rho(G)\le\frac{\delta(G)-1}{2}+\sqrt{2e(G)-n\delta(G)+\frac{(\delta(G)+1)^2}{4}},$$
with equality if and only if $G$ is either a $\delta(G)$-regular graph or a bidegreed graph in which each vertex is
of degree either $\delta(G)$ or $n-1$.
\end{lemma}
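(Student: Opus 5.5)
The plan is the classical Hong--Shu--Fang/Nikiforov argument: compare $\rho^2-(\delta-1)\rho$ with a row sum of the matrix $B=A^2-(\delta-1)A$, where $A=A(G)$, $\delta=\delta(G)$ and $m=e(G)$. The claimed inequality is equivalent to
$$\rho^2-(\delta-1)\rho\le 2m-(n-1)\delta ,$$
since the quadratic $t^2-(\delta-1)t$ is increasing for $t\ge(\delta-1)/2$, and $\rho\ge\delta\ge(\delta-1)/2$. The ambient convention of the paper is that $G$ is connected. If $G$ were disconnected, I would apply the bound to the component $G_1$ attaining $\rho$: its right-hand side is no larger, because each of the other $n-n_1$ vertices contributes $d(v)-\delta\ge0$ to $2m-n\delta$, and its minimum degree is at least $\delta$. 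So assume $G$ is connected, and let $x$ be the positive unit Perron vector.

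First I compute the row sums of $B$. For a vertex $u$,
$$r_u:=\sum_v B_{uv}=\sum_{v\sim u}d_v-(\delta-1)d_u .$$
Every vertex other than $u$ and its $d_u$ neighbours has degree at least $\delta$. Hence
$$\sum_{v\sim u}d_v\le 2m-d_u-(n-1-d_u)\delta ,$$
which gives $r_u\le 2m-(n-1)\delta$ for every $u$. Equality holds exactly when every non-neighbour of $u$ has degree $\delta$.

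Next I transfer this to $\rho$, using $Bx=(\rho^2-(\delta-1)\rho)x$. Choose $u$ with $x_u=\max_v x_v$ and write
$$(\rho^2-(\delta-1)\rho)x_u=(d_u-\delta+1)x_u+\sum_{v\sim u}\bigl(c_{uv}-\delta+1\bigr)x_v+\sum_{v\not\sim u,\,v\ne u}c_{uv}x_v ,$$
where $c_{uv}=|N(u)\cap N(v)|$. If all coefficients here are nonnegative, then bounding each $x_v$ by $x_u$ gives $\rho^2-(\delta-1)\rho\le r_u$, and we are done.

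\textbf{Main obstacle.} The step above is not automatic, because the entry $c_{uv}-\delta+1$ for an adjacent pair with few common neighbours can be negative. There I would not bound each $x_v$ by $x_u$ termwise. Instead I would use the eigen-equation once more in the form $\sum_{v\sim u}x_v=\rho x_u$, and rewrite the sum as
$$\rho^2x_u=\sum_{v\sim u}\sum_{w\sim v}x_w .$$
The idea is to group the walks $u\to v\to w$ so that the subtraction $(\delta-1)\rho x_u=(\delta-1)\sum_{v\sim u}x_v$ cancels, for each neighbour $v$, exactly $\delta-1$ of the terms $x_w$ with $w\ne u$; this is possible since $d_v\ge\delta$. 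After this cancellation only nonnegative multiples of the $x_w$ remain, and their total count is $r_u$. I expect making this charging precise to be the delicate point. If it fails, the fallback is Nikiforov's approach: apply $\rho^2-(\delta-1)\rho\le\max_u r_u$ to the nonnegative matrix $B+(\delta-1)I$ when $\delta\le 2$, and otherwise argue via the walk-count comparison above.

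\textbf{Equality.} If $G$ is $\delta$-regular, or every degree is $\delta$ or $n-1$, a direct computation shows all $r_u$ equal $2m-(n-1)\delta$, and $x$ is constant on each degree class. Substituting into the eigen-equations then gives equality. Conversely, suppose equality holds. Then every inequality above must be tight for each maximiser $u$, and by the eigen-equation the bound propagates to all vertices, so $x$ is constant on the relevant set. Tightness of $r_u$ forces every non-neighbour of $u$ to have degree $\delta$. Applying this to every vertex $u$, any vertex $w$ with $d_w>\delta$ must be adjacent to all other vertices, so $d_w=n-1$. This gives the two stated extremal families.
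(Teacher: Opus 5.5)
The paper only cites this lemma (Hong--Shu--Fang for connected graphs, Nikiforov in general), so there is no in-paper proof to compare with. Your row-sum computation is correct: $r_u=\sum_{v\sim u}d_v-(\delta-1)d_u\le 2m-(n-1)\delta$, with equality iff every non-neighbour of $u$ has degree $\delta$.

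The gap is the step from these row sums to $\rho^2-(\delta-1)\rho$. The maximum-coordinate argument really does fail. For adjacent $u,v$ we have $B_{uv}=c_{uv}-(\delta-1)<0$ whenever the edge $uv$ lies in fewer than $\delta-1$ triangles; this holds for every edge of a triangle-free graph with $\delta\ge 2$. Neither of your patches closes this.

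\begin{itemize}
\item The ``charging'' cannot work as described. For a neighbour $v$ of $u$, the subtracted quantity is $(\delta-1)x_v$, not any of the $x_w$ with $w\sim v$, so nothing cancels. Bounding $\sum_{w\sim v}x_w-(\delta-1)x_v$ above by $(d_v-\delta+1)x_u$ would require $x_v\ge x_u$. Choosing $u$ as a maximiser gives exactly the opposite.
\item The fallback is false. Adding $(\delta-1)I$ changes only the diagonal, so for $\delta=2$ the matrix $B+I$ still has entry $-1$ at every edge lying in no triangle (e.g.\ $C_4$).
\item There is also a slip in your displayed identity. The diagonal entry is $B_{uu}=d_u$, not $d_u-\delta+1$. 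As written, your coefficients sum to $r_u-(\delta-1)$, and the identity already fails for $K_n$ with $n\ge 3$.
\end{itemize}

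The missing idea is to use that $B$ is symmetric and $x>0$, and to pair with the all-ones vector instead of looking at a single coordinate:
\[
(\rho^2-(\delta-1)\rho)\,\mathbf{1}^{T}x=\mathbf{1}^{T}Bx=(B\mathbf{1})^{T}x=\sum_u r_ux_u\le(2m-(n-1)\delta)\,\mathbf{1}^{T}x .
\]
This uses only the row-sum bound you already have and needs no sign information on the entries of $B$. Symmetry is genuinely needed here; for non-symmetric real matrices with a positive eigenvector the analogous statement fails.

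This identity also settles the equality case, which your current argument does not. Your claim that tightness ``propagates to all vertices'' is unproved, because in the maximiser framework tightness only gives information at maximisers. Here, since every $x_u>0$, equality holds iff $r_u=2m-(n-1)\delta$ for \emph{every} $u$. That means every non-neighbour of every vertex has degree $\delta$, which is equivalent to all degrees lying in $\{\delta,n-1\}$. Conversely, in that case $B\mathbf{1}=(2m-(n-1)\delta)\mathbf{1}$, and the same identity gives equality.

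The characterisation genuinely needs connectivity: $P_3\cup K_2$ attains equality but is in neither family. So your reduction for disconnected graphs is fine for the inequality only.
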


\begin{lemma} [See \cite{ V.Nikiforov1}]\label{V.N}
For nonnegative integers $p$ and $q$ with $2q\le p(p-1)$ and $0\le x\le p-1$, the function $f(x)=(x-1)/2+\sqrt{2q-px+(x+1)^2/4}$ is decreasing with respect to $x$.
\end{lemma}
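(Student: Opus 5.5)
The plan is to avoid differentiating and instead rewrite the expression under the square root as a difference of squares. The obstacle is only to spot the right substitution. Set
$$u=u(x)=p-\frac{x+1}{2},\qquad c=p(p-1)-2q .$$
The hypothesis $2q\le p(p-1)$ gives $c\ge 0$. For $0\le x\le p-1$ we have $u\ge p-\frac{p}{2}=\frac p2\ge 0$. Moreover $u$ is a strictly decreasing affine function of $x$.

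\textbf{Step 1: rewrite $f$ in terms of $u$.} Expanding gives
$$u^2=p^2-p(x+1)+\frac{(x+1)^2}{4}.$$
Hence
$$2q-px+\frac{(x+1)^2}{4}=u^2-\bigl(p^2-p-2q\bigr)=u^2-c.$$
Also $\frac{x-1}{2}=p-1-u$. Therefore
$$f(x)=p-1+\bigl(\sqrt{u^2-c}-u\bigr),$$
wherever $f$ is defined, that is, wherever $u^2\ge c$. This is implicitly assumed in the statement, since $f$ must be real-valued.

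\textbf{Step 2: monotonicity in $u$.} Put $g(u)=\sqrt{u^2-c}-u$ for $u\ge\sqrt c\ge 0$. If $c=0$, then $g\equiv 0$ because $u\ge 0$. If $c>0$, rationalizing gives
$$g(u)=-\frac{c}{\sqrt{u^2-c}+u}.$$
The denominator is positive and nondecreasing in $u$, so $g$ is nondecreasing in $u$.

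\textbf{Step 3: conclude.} As $x$ increases on $[0,p-1]$, $u(x)$ decreases. Hence $g(u(x))$, and so $f(x)=p-1+g(u(x))$, is nonincreasing, i.e.\ $f$ is decreasing in $x$. In the case $c>0$ it is strictly decreasing.

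As a cross-check, one can differentiate directly. On the interior of the domain,
$$f'(x)=\frac12+\frac{\frac{x+1}{2}-p}{2\sqrt{u^2-c}}=\frac12-\frac{u}{2\sqrt{u^2-c}}\le 0,$$
since $\sqrt{u^2-c}\le u$ with $u\ge 0$. This gives the same conclusion.
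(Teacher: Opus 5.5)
The paper gives no proof of this lemma; it is quoted from Nikiforov. There, the standard justification is the derivative computation you give as a cross-check: $f'(x)\le 0$ is equivalent to $\bigl(p-\tfrac{x+1}{2}\bigr)^2\ge 2q-px+\tfrac{(x+1)^2}{4}$, i.e.\ to $2q\le p(p-1)$.

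Your main argument is correct and reaches the same inequality without calculus. With $u=p-\frac{x+1}{2}\ge \frac p2$ and $c=p(p-1)-2q\ge 0$ you get $f(x)=p-1-\frac{c}{\sqrt{u^2-c}+u}$, or $f\equiv p-1$ when $c=0$. Monotonicity is then immediate, since $u$ decreases in $x$.

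What your version buys over the derivative check:
\begin{itemize}
\item It compares function values directly, so it covers points where the radicand vanishes and $f'$ is undefined.
\item It makes explicit that ``decreasing'' can only mean nonincreasing: for $2q=p(p-1)$ the function is constant, and the decrease is strict otherwise.
\item It correctly flags that $f$ need not be real on all of $[0,p-1]$; for example $q=0$, $p=4$, $x=3$ gives radicand $-8$.
\end{itemize}

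You could close that last point in one line. Since $u\ge 0$ decreases in $x$, the radicand $u^2-c$ is decreasing on $[0,p-1]$, so the set where $f$ is real is an initial segment of $[0,p-1]$. In the paper's applications ($p=n$, $q=e(G)$, $k\le x\le\delta$) this segment contains every relevant point, because $2e(G)\ge n\delta$.
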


\begin{lemma}[See \cite{Nosal}]\label{duoling}
Let $G$ be a connected graph, and let $u,v$ be two vertices of $G$. Suppose that $v_1,v_2,\ldots,v_s\in N_G(v)\backslash N_G(u)$
with $s\ge1$, and $G^*$ is the graph obtained from $G$ by deleting the edges $vv_i$ and adding the edges $uv_i$ for $1\le i\le s$.
Let $x$ be the Perron vector of $A(G)$. If $x_u\ge x_v$, then $\rho(G)<\rho(G^*)$.
\end{lemma}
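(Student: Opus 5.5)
The plan is a Rayleigh quotient comparison, followed by an eigenvalue-equation argument for strictness. Throughout I assume $v_i\neq u$ for all $i$, so that $G^*$ is again a simple graph; this is implicit in the construction, since otherwise the loop $uu$ would be added. Since $G$ is connected, the Perron--Frobenius theorem (as used for Lemma \ref{edge}) gives a Perron vector $x$ of $A(G)$ with all entries positive. Normalize it so that $x^Tx=1$; then $\rho(G)=x^TA(G)x$.

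\emph{Step 1: the weak inequality.} Only the edges $vv_i$ and $uv_i$ change, so
$$x^TA(G^*)x-x^TA(G)x=2\sum_{i=1}^{s}x_{v_i}(x_u-x_v)\ge 0,$$
because $x_u\ge x_v$ and $x_{v_i}>0$. The Rayleigh principle for the symmetric matrix $A(G^*)$ then gives
$$\rho(G^*)\ge x^TA(G^*)x\ge x^TA(G)x=\rho(G).$$

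\emph{Step 2: strictness.} Suppose for contradiction that $\rho(G^*)=\rho(G)$. Then every inequality in Step 1 is an equality; in particular $x^TA(G^*)x=\rho(G^*)$. For a symmetric matrix, a unit vector attaining the maximum of the Rayleigh quotient is an eigenvector for the largest eigenvalue. Hence $A(G^*)x=\rho(G^*)x=\rho(G)x$, and this holds even if $G^*$ happens to be disconnected. Now read off the eigen-equation at the vertex $v$ in both graphs. Using $N_{G^*}(v)=N_G(v)\setminus\{v_1,\dots,v_s\}$, we get
$$\rho(G)x_v=\sum_{w\in N_{G^*}(v)}x_w=\sum_{w\in N_G(v)}x_w-\sum_{i=1}^s x_{v_i}=\rho(G)x_v-\sum_{i=1}^s x_{v_i}.$$
This forces $\sum_i x_{v_i}=0$, which contradicts positivity of $x$ together with $s\ge 1$. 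Therefore $\rho(G)<\rho(G^*)$.

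The main point of care is Step 2: one must justify that equality of the Rayleigh quotients makes $x$ an eigenvector of $A(G^*)$ without assuming $G^*$ is connected. The spectral theorem for real symmetric matrices handles this, so the argument does not need Perron--Frobenius applied to $G^*$.
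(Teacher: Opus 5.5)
Your proof is correct. The paper gives no proof of this lemma and simply quotes it from the literature; your argument is the standard one, using the Rayleigh quotient $x^TA(G^*)x=\rho(G)+2(x_u-x_v)\sum_i x_{v_i}$ to get $\rho(G^*)\ge\rho(G)$ and then the eigen-equation at $v$ to exclude equality. Your care in not assuming $G^*$ is connected is warranted: for $G=P_4$ with $u,v$ its two ends and $v_1$ the neighbour of $v$, the hypothesis $x_u=x_v$ holds and $G^*$ is a triangle plus an isolated vertex.
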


\begin{lemma} [See \cite{D.Fan1}]\label{Fan}
Let $a$ and $b$ be two positive integers. If $a\ge b\ge3$, then
$$
\binom{a}{2}+\displaystyle\binom{b}{2}<\displaystyle\binom{a+1}{2}+\displaystyle\binom{b-1}{2}.
$$
\end{lemma}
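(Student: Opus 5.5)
The plan is to prove the inequality by a direct computation, comparing the two sides term by term through the Pascal-type identities for consecutive binomial coefficients. No spectral or structural results from earlier in the paper are needed; the inequality is purely arithmetic.

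First, I will use the identities
$$\binom{a+1}{2}=\binom{a}{2}+a \qquad\text{and}\qquad \binom{b}{2}=\binom{b-1}{2}+(b-1),$$
both of which follow from $\binom{m+1}{2}-\binom{m}{2}=\frac{(m+1)m-m(m-1)}{2}=m$.

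Second, I will subtract the left-hand side from the right-hand side. This gives
$$\binom{a+1}{2}+\binom{b-1}{2}-\binom{a}{2}-\binom{b}{2}=a-(b-1)=a-b+1.$$

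Third, the hypothesis $a\ge b$ gives $a-b+1\ge 1>0$, which is exactly the strict inequality claimed.

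I do not expect any genuine obstacle here. The only point to note is that the condition $b\ge 3$ is not needed for the inequality itself; only $a\ge b$ is used. The condition merely keeps $\binom{b-1}{2}$ meaningful in the applications, where the lemma presumably serves to show that moving a vertex from a smaller clique to a larger one increases the edge count. So I will simply record that the argument uses only $a\ge b$.
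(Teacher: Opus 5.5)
Your computation is correct and complete. Using $\binom{m+1}{2}-\binom{m}{2}=m$, the right-hand side minus the left-hand side is exactly $a-(b-1)=a-b+1\ge 1$. You are also right that only $a\ge b$ is used: the identity holds for every integer $m\ge 0$, so $b\ge 1$ would already suffice.

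The paper gives no proof of its own. It imports the lemma from the cited reference and uses it in Claim 4 to move vertices from smaller components into the largest one. Your direct computation is the natural proof to supply.
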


The coefficient recursive bounding method is a technique for proving the non-negativity of polynomials by progressively reducing the polynomial degree via coefficient transformations.
Using this method, we obtain the following result.
\begin{remark}\label{r1}
Let $n\ge 13k^2-11k-1$, $k\ge 5,$ $f_1 = \left(\frac{3k}{2}-\frac{11}{2}\right)n^2 - \left(\frac{9}{2}k^2-16k-\frac{9}{2}\right)n + \frac{9}{4}k^3-10k^2-\frac{33}{4}k>0$
and $f_2 = \frac{k(k-2)}{2}\sqrt{n^2-3(k+1)n+\frac{13}{4}k^2+\frac{15}{2}k+\frac{1}{4}}$.
Then we have
\begin{flalign*}
4(f_1^2\! -\! f_2^2)=&\left(9k^{2}\!-\!66k\!+\!121\right)n^{4}
   \!-\!\left(54k^{3}\!-\!390k^{2}\!+\!650k\!+\!198\right)n^{3}\!+\!(107k^{4}\!-\!793k^{3}\!+\!1202k^{2}\\
   &\!+\!939k\!+\!81)n^{2}\!-\!\left(78k^{5}\!-\!645k^{4}\!+\!905k^{3}\!+\!1413k^{2}\!+\!297k\right)n\! +\!653k^{3}\!+\!272k^{2}\\
   &\!+\!17k^{6}\!-\!181k^{5}\!+\!263k^{4}>0.
\end{flalign*}

\end{remark}

Suppose that $V((k+1)K_1)=\{v_1,\ldots, v_{k+1}\}$, $V(K_{n-2k-1})=\{w_1,\ldots, w_{n-2k-1}\}$.
Let $\mathcal{G}_{n,k}~(k\ge4)$ be the set of 1-tough graphs obtained from $K_k\vee ((k+1)K_1\cup K_{n-2k-1})$ by adding $k-1$ edges between $(k+1)K_1$ and $K_{n-2k-1}$
such that $N_{K_{n-2k-1}}(v_{i})=\{w_{1},\ldots,w_{d_i}\}$, $d_1\ge d_2\ge \cdots\ge d_{i}\ge 0$, where $d_1+\cdots+d_i=k-1$ and $d_1\ge 2, d_2\ge1$, for any $v_i\in V((k+1)K_1)$, $1\le i \le k-2$.
\begin{lemma} \label{main lemma}
Let $G\in \mathcal{G}_{n,k}$. If  $n \geq 13k^2-11k-1$ and $k\ge 4$, then
$\rho(G_{n,k})\ge\rho(G)$, with equality if and only if $G\cong G_{n,k}$, see Fig.\,1. (b).
\end{lemma}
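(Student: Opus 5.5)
Every $G\in\mathcal{G}_{n,k}$ has the same skeleton $H=K_k\vee((k+1)K_1\cup K_{n-2k-1})$ plus $k-1$ extra edges $E'$ between $I=\{v_1,\dots,v_{k+1}\}$ and $W=V(K_{n-2k-1})$. These edges are encoded by the degree sequence $d_1\ge d_2\ge\cdots$ with $d_1\ge2$, $d_2\ge1$, $\sum d_i=k-1$, and nested neighbourhoods $N_W(v_i)=\{w_1,\dots,w_{d_i}\}$. In $G_{n,k}$ we have $d_1=k-2$ and $d_2=1$. The plan is to show that among admissible sequences, $(k-2,1,0,\dots,0)$ uniquely maximizes $\rho$.

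\textbf{Step 1 (reduction by edge shifting).} Let $x$ be the Perron vector of $A(G)$. The vertices $v_i$ all have the same neighbourhood in the $K_k$ part, so $x_{v_i}$ is determined by $d_i$ and the weights $x_{w_j}$. From the eigen-equations, the vertices of $W$ satisfy $x_{w_1}\ge x_{w_2}\ge\cdots$, and $x_{v_i}\ge x_{v_j}$ whenever $d_i\ge d_j$. Suppose $d_3\ge1$. Take $v_i$ with $i\ge3$ and $d_i\ge1$, and move an edge $v_iw_j$ to $v_1w_{d_1+1}$. More cleanly, apply Lemma~\ref{duoling} with $u=v_1$, $v=v_i$ on the edge $v_iw_1$. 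This fails because $w_1$ is already adjacent to $v_1$. So instead I would shift the edges at $w_j$: use $u=w_{d_1+1}$ and $v=w_{d_i}$ (for the relevant vertex), which requires comparing $x_{w}$ entries. Since this local comparison is delicate, I would move to the direct comparison in Step 2.

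\textbf{Step 2 (direct comparison).} After Step 1, the essential case is $G$ with $d_1=a$, $d_2=b\ge1$, $d_1+d_2=k-1$, and possibly further vertices. Write $\rho(G)$ as the largest root of the characteristic polynomial of the equitable (quotient) partition, with parts $K_k$, the $v_i$ grouped by their $d_i$, the $w_j$ grouped by how many $v_i$ they see, and the rest of $W$. Subtracting the characteristic polynomials of $G_{n,k}$ and $G$ should give a difference with a sign determined on $[n-k-2,\infty)$. The same approach also handles general sequences: compare $\rho$ for the sequence $(d_1,\dots)$ against the sequence obtained by moving one edge from the last nonzero $d_i$ to $v_1$. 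The increment of $\rho$ is governed by $x_{v_1}x_{w_{d_1+1}}$ versus $x_{v_i}x_{w_{d_i}}$. Since the $w$-coordinates are all nearly equal ($\approx 1/\rho$ scaled), while $x_{v_1}>x_{v_i}$ when $d_1>d_i$, the Rayleigh quotient comparison $\rho(G')\ge x^TA(G')x$ gives $x^TA(G')x-x^TA(G)x=2(x_{v_1}x_{w_{d_1+1}}-x_{v_i}x_{w_{d_i}})$. I would aim to show this is $>0$ using the eigen-equations $\rho x_{v}=\sum_{K_k}x+\sum_{j\le d}x_{w_j}$. This gives $x_{v_1}-x_{v_i}\ge (d_1-d_i)\min x_w/\rho$, while $x_{w_{d_i}}-x_{w_{d_1+1}}=O(x_v/\rho)$ is of lower order. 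Iterating reaches $(k-2,1,0,\dots)$. Note that $d_2\ge1$ must be preserved, which is why we never move the last edge off $v_2$: since $d_1\le k-2$, moves from $v_i$ with $i\ge3$, or from $v_2$ while $d_2\ge2$, are always possible.

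\textbf{Main obstacle.} The main difficulty is the quantitative inequality $x_{v_1}x_{w_{d_1+1}}>x_{v_i}x_{w_{d_i}}$. It requires bounding $x_{w_{d_i}}/x_{w_{d_1+1}}\le 1+c/\rho$ and $x_{v_1}/x_{v_i}\ge 1+c'/\rho$ with $c'>c$, where $\rho\approx n-k$. I would first estimate all entries from the eigen-equations. With $\rho\ge n-k-2$, the $W$-entries differ by at most $(x_{v_1}+\cdots)/(\rho+1)$, and $x_v\approx k\,x_K/\rho$ is itself small. Hence $x_{w_{d_i}}-x_{w_{d_1+1}}=O(k\,x_w/\rho^2)$, while $x_{v_1}-x_{v_i}\ge x_w/\rho$. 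This gives strict inequality once $n$ is large, for instance $n\ge13k^2-11k-1$. Strictness at each step yields the equality case $G\cong G_{n,k}$ by Lemma~\ref{edge}-style uniqueness.
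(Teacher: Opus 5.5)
Your Step~1 is abandoned, and the quotient-polynomial remark in Step~2 is only asserted: for general $k$ the quotient has unboundedly many cells, and no sign computation is given. So everything rests on the single-edge shift $v_iw_{d_i}\to v_1w_{d_1+1}$, justified by the Rayleigh quotient of the \emph{old} Perron vector $x$. That step fails whenever $d_i=d_1$. In that case $N_G(v_i)=N_G(v_1)$, so $x_{v_i}=x_{v_1}$ exactly. Writing $T=\{v_1,\dots,v_{k+1}\}$, you also have $(\rho+1)(x_{w_{d_1}}-x_{w_{d_1+1}})=\sum_{v\in N_T(w_{d_1})}x_v>0$. Hence
\[
x^{T}A(G')x-\rho(G)=2x_{v_1}\bigl(x_{w_{d_1+1}}-x_{w_{d_1}}\bigr)<0,
\]
and nothing follows. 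Such ties cannot be avoided. For $k=5$, the graph with $(d_1,d_2)=(2,2)$ belongs to $\mathcal{G}_{n,5}$. There the four edges already sit on the four pairs $(v,w)$ maximizing $x_vx_w$, so \emph{every} single-edge shift strictly decreases $x^{T}Ax$. The same happens for $(2,2,2)$ with $k=7$, $(3,3,3)$ with $k=10$, and so on, so your iteration cannot even start from these graphs. Your own estimate shows why: $x_{v_1}-x_{v_i}\ge (d_1-d_i)\min x_w/\rho$ is vacuous when $d_1=d_i$, and the remaining $w$-term has the wrong sign. The real gain comes from re-optimizing the coordinates at $v_1$ and $v_i$ after the move. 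Eliminating $T$ from the eigen-equations, $\rho$ is governed to leading order by $\sum_i (k+d_i)^2$, which a shift raises by $2(d_1-d_i)+2$. A fixed test vector sees only the $2(d_1-d_i)$ part.

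The missing idea is the one the paper uses: bring in the Perron vector $y$ of the graph you move \emph{towards}. The paper does this in one step, $Y^{T}(\rho_2-\rho_1)X=Y^{T}(A(G_{n,k})-A(G))X$. There positivity comes from $y_{v_1}$ being markedly larger than $y_{v_2},y_{v_3}$ in $G_{n,k}$ (e.g.\ $y_2=y_1-(k-3)y_6/\rho_2$ in Fact~2), independently of any ties in $G$. You can repair your scheme in the same spirit, shift by shift. With $y$ the Perron vector of $G'$,
\[
y^{T}x\,\bigl(\rho(G')-\rho(G)\bigr)=\bigl(y_{v_1}x_{w_{d_1+1}}-y_{v_i}x_{w_{d_i}}\bigr)+\bigl(x_{v_1}y_{w_{d_1+1}}-x_{v_i}y_{w_{d_i}}\bigr).
\]
Now $\rho(G')(y_{v_1}-y_{v_i})=\sum_{j=d_i}^{d_1+1}y_{w_j}$ has at least two terms even when $d_i=d_1$. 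This gives a main term of order $1/\rho$ against error terms of order $k^3/\rho^3$ (relative to $x_{\max}y_{\max}$), which is ample for $n\ge 13k^2-11k-1$.
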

\begin{proof}
Suppose that $T=(k+1)K_1$, $C=K_{n-2k-1}$ and $V(T)=\{v_1,\ldots, v_{k+1}\}$, $V(C)=\{w_1,\ldots, w_{n-2k-1}\}$, $V(K_k)=\{u_1,\ldots, u_k\}$.
Let $G\in \mathcal{G}_{n,k}$,
by the definition of $\mathcal{G}_{n,k}$, we can assume that $d_{C}(v_i)=d_i~(\ge0)$, and $d_1\ge d_2\ge\cdots\ge d_{i}\ge0$, $d_1\ge 2, d_2\ge1$ for $v_i\in V(T)$, $1\le i \le k-2$.
Let $d_{T}(w_i)=r_i~(\ge0)$, then we have $N_{T}(w_{i})=\{v_{1},\ldots,v_{r_i}\}$, where $r_1\ge r_2\ge\cdots\ge r_{i}\ge0$ and $r_1\ge 2, r_2\ge1$ for $w_i\in V(C)$, $1\le i\le k-2$.
Note that $\sum_{i=1}^{d_i}=\sum_{i=1}^{r_i}=k-1$. Obviously, $\mathcal{G}_{n,4}=\{G_{n,4}\}$ when $k=4$ and
\begin{align*}
    G_{n,k} = G + \sum_{i=d_{1}+1}^{k-2} v_{1} w_{i} - \sum_{i=2}^{d_{2}} v_{2} w_{i} - \sum_{i=3}^{r_{1}} \sum_{j=1}^{d_{r_i}} v_{i} w_{j}, \text{when}~ k\ge 5.
\end{align*}
Let $X=(x_1,\dots, x_{w_n})^T$ be the Perron vector of $A(G)$. By symmetry, we can suppose that
$x_{u_{1}} = \cdots = x_{u_{k}} = x_{1},$ $x_{w_{d_1+1}} = \cdots = x_{w_{n}} = x_{2},$ $x_{v_{r_1+1}} = \cdots = x_{v_{k+1}} = x_{3}.$
Clearly, $x_{1} > x_{w_{1}} \geq x_{w_{2}} \geq \cdots \geq x_{w_{d_1}} > x_{2} > x_{v_{1}} \geq x_{v_2}\ge \dots \ge x_{v_{r_1}}>x_3$ by eigenvalue equation $\rho(G) x_i=A(G)x_{x_i}$ for any vertex in $V(G)$.
Let $Y = (y_1, \dots, y_{n})^T$ be the Perron vector of $A(G_{n,k})$. By symmetry, we can suppose that $y_{v_{1}} = y_{1},$ $y_{v_{2}} = y_{2},$ $y_{v_{3}} = \cdots = y_{v_{k+1}} = y_{3},$ $y_{u_{1}} = \cdots = y_{u_{k}} = y_{4},$ $y_{w_{1}} = y_{5},$  $y_{w_{2}} = \cdots = y_{w_{k-2}} = y_{6},$ $y_{w_{k-1}} = \cdots = y_{w_{n}} = y_{7}.$ Similarly, we can obtain
   $ y_{4} > y_{5} > y_{6} > y_{7} > y_{1} > y_{2} > y_{3}.$
Let $\rho(G)=\rho_1$ , $\rho(G_{n,k})=\rho_2$.
Then we have
\begin{align*}
&Y^{T}(\rho_{2} - \rho_{1})X = Y^{T}\bigl(A(G_{n,k}) - A(G)\bigr)X \\
&= \sum_{i=d_{1}+1}^{k-2} (x_{v_{1}}y_{w_{i}} + x_{w_{i}}y_{v_{1}}) - \sum_{i=2}^{d_{2}} (x_{v_{2}}y_{w_{i}} + x_{w_{i}}y_{v_{2}})
- \sum_{i=3}^{r_{1}} \sum_{j=1}^{d_{3}} (x_{v_{i}}y_{w_{j}} + x_{w_{j}}y_{v_{i}}) \\
&> \!(k\! -\! d_{1}\! -\! 2)(x_{v_{1}} y_{6}\! +\! x_{2} y_{1} \!-\! x_{v_{3}} y_{5} \!- \!x_{w_{1}} y_{3})\!+\!(\!d_{2}\! -\! 1)(x_{v_{2}}y_{6}\! +\! x_{w_{2}}y_{2}\!-\!x_{v_{3}}y_{5}\! -\! x_{w_{1}}y_{3})\\
&~~~~~\text{(since } x_{w_2}\ge x_{w_i}, x_{v_3}\ge x_{v_j}, 1\le i\le d_2, 3\le j\le r_1  \text{)}.
\end{align*}
From the eigenvalue equations
$A(G)X= \rho_1 X$,
$A(G_{n,k})Y = \rho_2 Y,$ we have
\begin{numcases}{}
\rho_{2}y_{1} = ky_{4}+y_{5}+(k-3)y_{6}~~~~~~~~~~~~~~~~~~~~~~~~~~~~~~~~~~~~~~~~~~~~~~~~~~~~~~~~~~~~~~~~~~\label{eq:1} \\
\rho_{2}y_{3} = ky_{4}\label{eq:2}\\
\rho_{2}y_4=y_1+y_2+ky_3+(k-1)y_4+y_5+(k-3)y_6+(n-3k+1)y_7\label{eq:0.3}\\
\rho_{2}y_{5} = y_{1}+y_{2}+ky_{4}+(k-3)y_{6}+(n-3k+1)y_{7} \label{eq:3}\\
\rho_{2}y_{6} = y_{1}+ky_{4}+y_{5}+(k-4)y_{6}+(n-3k+1)y_{7}\label{eq:4} \\
\rho _{1}x_{w_{1}} = \sum_{i=1}^{r_{1}}x_{v_{i}}+k x_{1}+\sum_{i=2}^{d_{1}}x_{w_{i}}+(n-2k-d_{1}-1)x_{2}\label{eq:5} \\
\rho _{1}x_{2} = k x_{1}+\sum_{i=1}^{d_{1}}x_{w_{i}}+(n-2k-d_{1}-2)x_{2}\label{eq:6}\\
\rho_{2}y_{2} = ky_{4}+y_{5}\label{eq:13} \\
\rho _{1}x_{w_{2}} = \sum_{i=1}^{r_{2}}x_{v_{i}}+k x_{1}+x_{w_{1}}+\sum_{i=3}^{d_{1}}x_{w_{i}}+(n-2k-d_{1}-1)x_{2}~~~~~~~~~~~~~~~~~~~~~\label{eq:14}
\end{numcases}
Next, we prove the following two facts.
{\flushleft\bf Fact 1.}  $ x_{v_{1}} y_{6} + x_{2} y_{1} - x_{v_{3}} y_{5} - x_{w_{1}} y_{3} > 0.$
{\flushleft\bf \emph{Proof}.}
Note that
\begin{numcases}{}
y_1 = y_3 + \frac{y_5 + (k-3)y_6}{\rho_2}~~~\text{(by } (\ref{eq:1}), (\ref{eq:2})\text{)}~~~~~~~~~~~~~~~~~~~~~~~~~~~~~~~~~~~~~~~~~~~~~~~~~~\label{eq:7} \\
y_5 = y_3 + \frac{y_1 + y_2 + (k-3)y_6 + (n-3k+1)y_7}{\rho_2}~~~\text{(by } (\ref{eq:2}), (\ref{eq:3})\text{)}\label{eq:8}\\
y_6 = y_3 + \frac{y_1 + y_5 + (k-4)y_6 + (n-3k+1)y_7}{\rho_2}~~~\text{(by } (\ref{eq:2}), (\ref{eq:4})\text{)}\label{eq:9}\\
(x_2- x_{w_1}) y_3  = -\frac{\sum_{i=1}^{r_1} x_{v_i}}{\rho_1+1} y_3~~~\text{(by } (\ref{eq:5}), (\ref{eq:6})\text{)}\label{eq:11}
\end{numcases}
Combining (\ref{eq:7})-(\ref{eq:9}) and $x_{v_1} \geq x_{v_3},$
then we have
\begin{align}
&(\rho_1+1)\rho_2 (x_{v_1}y_6 + x_2y_1 - x_{v_3}y_5 - x_{w_1}y_3)\nonumber\\
 =&(\rho_1+1)\rho_2 \Big( x_{v_1}y_{3} - x_{v_3}y_{3} + x_{2}y_{3} - x_{w_1}y_{3}  + \frac{1}{\rho_{2}}\left((x_{v_1} - x_{v_3})y_{1} - x_{v_3}y_{2}+(x_{v_1} + x_{2})y_{5}\right)\nonumber \\
& + \frac{1}{\rho_{2}}\left((k-4)x_{v_1} + (k-3)(x_{2} -x_{v_3})\right) y_{6}
   + (n-3k+1)(x_{v_1} - x_{v_3})y_{7}\Big) \nonumber\\
\ge&  (\rho_1+1)\big(\left((k-4)x_{v_1} + (k-3)(x_{2}-x_{v_3})\right) y_{6}
   + (n-3k+1)(x_{v_1} - x_{v_3})y_{7}\nonumber\\
   & +(x_{v_1} - x_{v_3})y_{1} - x_{v_3}y_{2}+(x_{v_1} + x_{2})y_{5}\big)-\sum_{i=1}^{r_1}\rho_2 x_{v_i} y_3\quad\text{(by } (\ref{eq:11}))\nonumber\\
\ge& (\rho_1\!+\!1) y_5 x_2\! +\! (\rho_1\!+\!1)(k\!-\!3)(y_6 x_2\!-\!y_6x_{v_3})\! +\! (\rho_1\!+\!1) y_1 (x_{v_1}\!-\!x_{v_3})\nonumber
\!+\! (\rho_1\!+\!1)( y_5 x_{v_1}\!-\!y_2 x_{v_3}) \nonumber\\
&\!+\! (\rho_1\!+\!1)(k\!-\!4)y_6 x_{v_1}\!+\! (\rho_1\!+\!1)(n\!-\!3k\!+\!1)y_7(x_{v_1}\!-\!x_{v_3})\!-\!\sum_{i=1}^{r_1} x_{v_i} k y_4
~\text{(by }\!\rho_2 y_3\! =\! k y_4\! ~\!(see (\ref{eq:2}) \text{)}\nonumber\\
>&(\rho_1+1) y_5 x_2 + (\rho_1+1)(k-4)y_6 x_{v_1} - \sum_{i=1}^{r_1} x_{v_i} k y_4\label{eq:0.14}\\
&\text{(by }x_2 > x_{v_3},  x_{v_1} \geq x_{v_3}, y_5 > y_2\text{)}\nonumber.
\end{align}
Since $\sum_{i=1}^{r_1} x_{v_i} \leq r_1 x_{v_1}$, $r_1 \leq k-2$ and $G$ contains $K_{n-k-1}$ as a proper subgraph, we have $\rho_1+1 > n-k-1 \geq 13k^2-12k-2$, $n-3k+1 \geq 13k^2-14k$.
Through computation, we have $(\rho_1+1)(k-4)>k(k-2),$ $k(\rho_1+1)>k(k-1)(k-2)$ and $(\rho_1+1)(k-4)^2>k(k-2)(k-3)$.
Combining the values of $y_4$, $y_5$, $y_6$ (see (\ref{eq:0.3})-(\ref{eq:4})), $y_7>y_3,$ and $y_4>y_1>y_2,$
we obtain
\begin{align*}
&\rho_2\left((\rho_1+1)y_5 x_2 + (\rho_1+1)(k-4)y_6 x_{v_1} - \sum_{i=1}^{r_1} x_{v_i} k y_4\right) \notag\\
&\ge (\rho_{1}\!+\!1)x_{2}(y_{1}\!+\!y_{2}\!+\!(k\!-\!3)y_{6}) \!+\!\left((\rho_{1}\!+\!1)(k\!-\!4)\!-\!k(k\!-\!2)\right)(y_1\!+\!y_2\!+\!y_5\!+\!(n\!-\!3k\!+\!1)y_7)x_{v_1}\\
&\quad\!+\!\left(k(\rho_1\!+\!1)\!-\!k(k\!-\!1)(k\!-\!2)\right)(y_4\!+\!y_3)x_{v_1}+\left((\rho_{1}\!+\!1)(k\!-\!4)^2-k(k-2)(k-3)\right)y_6x_{v_1}\\
&>(\rho_{1}+1)x_{2}(y_{1}+y_{2}+(k-3)y_{6})\\
&>0,
\end{align*}
that is $$(\rho_1+1)y_5 x_2 + (\rho_1+1)(k-4)y_6 x_{v_1} - \sum_{i=1}^{r_1} x_{v_i} k y_4>0.$$
Putting this into (\ref{eq:0.14}), we get $ x_{v_{1}} y_{6} + x_{2} y_{1} - x_{v_{3}} y_{5} - x_{w_{1}} y_{3} > 0$, as desired.

{\flushleft\bf Fact 2.}  $ x_{v_{1}} y_{6} + x_{2} y_{1} - x_{v_{2}} y_{6} - x_{w_{2}} y_{2} > 0.$
{\flushleft\bf \emph{Proof}.}
Note that
\begin{numcases}{}
y_2 = y_1 - \frac{(k-3)y_6}{\rho_2}~~~~~~~~\text{(by } (\ref{eq:1}), (\ref{eq:13})\text{)}~~~~~~~~~~~~~~~~~~~~~~~~~~~~~~~~~~~~~~\label{eq:15}\\
 x_2 = x_{w_2} - \frac{\sum_{i=1}^{r_2} x_{v_i}}{\rho_1+1}~~~~~~\text{(by } (\ref{eq:6}), (\ref{eq:14})\text{)}~~~~~~~~~~~~~~~~~~~~~~~~~~~~~~~~~~~~~~~~~~\label{eq:16}\\
  y_6 = y_1 + \frac{n-3k+1}{\rho_2+1} y_7~~~\text{(by } (\ref{eq:1}), (\ref{eq:4})\text{)}~~~~~~~~~~~~~~~~~~~~~~~~~~~~~~~~~~~~~~~~~~\label{eq:17}
\end{numcases}
Since $x_{v_1}y_6 - x_{v_2}y_6 \geq 0$ by $x_{v_1} \geq x_{v_2}$, $2r_2\le r_1+r_2\le k-1$, we have $r_2 \leq \frac{k-1}{2}$, that is,
$\rho_2 \sum_{i=1}^{r_2} x_{v_i} \leq \rho_2 r_2 x_{v_1} \leq \frac{k-1}{2}\rho_2 x_{v_1}. $
By Lemmas \ref{degree} and \ref{V.N}, we have
$$\rho_2 \leq \frac{k-1}{2} + \sqrt{n^2-3(k+1)n+\frac{13}{4}k^2+\frac{15}{2}k+\frac{1}{4}}.$$
It is implies that $\rho_2(\rho_2+1) \leq n^2-3(k+1)n+\frac{7}{2}k^2+\frac{15}{2}k+k\sqrt{n^2-3(k+1)n+\frac{13}{4}k^2+\frac{15}{2}k+\frac{1}{4}}$. Then we have
\begin{align*}
 &x_{v_1}y_{6} + x_{2}y_{1} - x_{v_2}y_{6} - x_{w_{2}}y_{2} \\
 &\geq x_{2}y_{1} - x_{w_{2}}y_{2} \\
& = (x_{2} - x_{w_{2}})y_{1} + \frac{(k-3)}{\rho_{2}} x_{w_{2}} y_{6} \quad (\text{by (\ref{eq:15})}) \\
&=\frac{1}{\rho_2(\rho_1+1)}\left((\rho_1+1)(k-3)x_{w_2}y_6 - \sum_{i=1}^{r_2} \rho_2 x_{v_i} y_1\right)\quad (\text{by (\ref{eq:16})})\\
&> \frac{1}{\rho_2(\rho_1+1)}\left(\rho_1(k-3)x_{w_2}y_6 - \sum_{i=1}^{r_2} \rho_2 x_{v_i} y_1\right)\\
&\!>\! \frac{1}{\rho_2(\rho_1\!+\!1)}\left(\rho_1(k\!-\!3)x_{w_2}\left(1\! +\! \frac{n\!-\!3k\!+\!1}{\rho_2\!+\!1}\right)y_1\! -\! \rho_2 \sum_{i=1}^{r_2} x_{v_i} y_1\right) \quad (\text{by } y_7\! >\! y_1,(\ref{eq:17})) \notag\\
&\!= \!\frac{(k\!-\!3)(n\!-\!3k\!+\!\rho_2\!+\!2)}{\rho_2(\rho_1+1)(\rho_2\!+\!1)}\left(\sum_{i\!=\!1}^{r_2} x_{v_i}\! +\! kx_1\! + \!x_{w_1}\! +\!\sum_{i=3}^{d_1} x_{w_i}\! +\! (n\!-\!2k\!-\!d_1\!-\!1)x_2\right)y_1 \\
&\quad - \frac{\rho_2}{\rho_2(\rho_1+1)} \sum_{i=1}^{r_2} x_{v_i} y_1 \quad (\text{by } (\ref{eq:14}))\\
&>\frac{x_{v_1}y_1}{\rho_2(\rho_1+1)}\left(\frac{1}{\rho_2+1}(k-3)(n-k+1)(2n-4k)-\frac{k-1}{2}\rho_2\right)\\
&~~~(\text{by } x_1 > x_{w_i} > x_2 > x_{v_1}\ge x_{v_j},\rho_2>n-k-2, 1\le i\le d_1, 1\le j\le r_2)\\
&\ge  \frac{x_{v_1}y_1}{\rho_2(\rho_1+1)(\rho_2+1)}\Big(\Big(\frac{3k}{2}-\frac{11}{2}\Big)n^2 - \Big(\frac{9}{2}k^2-16k-\frac{9}{2}\Big)n + \frac{9}{4}k^3-10k^2-\frac{33}{4}k \notag\\
&\quad - \frac{k(k-2)}{2}\sqrt{n^2-3(k+1)n+\frac{13}{4}k^2+\frac{15}{2}k+\frac{1}{4}}\Big)\\
&= \frac{x_{v_1}y_1(f_1-f_2)}{\rho_2(\rho_1+1)(\rho_2+1)}>0\quad (\text{by Remark } (1 )),~ \text{as desired}.
\end{align*}
By Facts 1 and 2, we can obtain that $\rho(G)\le \rho(G_{n,k})$, with equality if and only if $G\cong G_{n,k}$.
This completes the proof of Lemma \ref{main lemma}.
\end{proof}

\section{Proof of Theorem \ref{tc}}
For any $S\subseteq G$, let $G[S]$ be the subgraph of $G$ induced by $S$ and $e(S)$ be the number of edges in $G[S]$.
Let $\mathcal{G}_n^k$ be the set of all connected 1-tough graphs of order $n \geq 13k^2-11k-1$ with $\delta(G) \ge k$ and no $k$-factor.
\renewcommand\proofname{\bf Proof of Theorem \ref{tc}}
\begin{proof}
Suppose that $G^*\in \mathcal{G}_n^k$ with the maximum spectral radius. By Lemma \ref{k-factor}, there exist two disjoint subsets $S,T\subseteq V(G^*)$, satisfying
$|S\cup T|$ as large as possible such that
\begin{equation}\label{equ::delta(S,T)}
\begin{aligned}
\delta_{G^*}(S, T)=k(|S|-|T|)+\sum_{v\in T}d_{G^*}(v)-e_{G^*}(S,T)-q_{G^*}(S,T)\le -2,
\end{aligned}
\end{equation}
where $q_{G^*}(S,T)$ is the number of the components $C$ of $G^*-(S\cup T)$ such that
$e_{G^*}(C,T)+k|V(C)|\equiv 1 ~(mod ~2)$. Let $C_1, C_2, \ldots, C_q$ be the
components of $G^*-(S\cup T)$ such that $e_{G^*}(C_i,T)+k|V(C)|\equiv 1 ~(mod ~2)$ and $|V(C_i)|=c_i$ where $1\le i\le q$.
Then we have $s+t\ge q$ due to $G$ is a connected 1-tough graph.
Let $|S|=s$, $|T|=t$ and $q_{G^*}(S,T)=q$.  By (\ref{equ::delta(S,T)}), we obtain
\begin{equation}\label{dG(v)}
\begin{aligned}
\sum_{v\in T}d_{G^*}(v)\le k(t-s)+e_{G^*}(S,T)+q_{G^*}(S,T)-2.
\end{aligned}
\end{equation}
Together with Lemma \ref{edge} and the choice of $G^*$, we get $G^*[S\cup T]=K_{s,t}$
and $G^*-T\cong K_s\vee (K_{c_1}\cup \cdots\cup K_{c_q}\cup K_{c_{q+1}})$, where $c_{q+1}=n-t-s-\sum_{i=1}^{q}c_i.$
Now,
we divide the proof into the following eight claims.

{\flushleft\bf Claim 1.} $t\ge s+1$.

{\flushleft\bf \emph{Proof}.} Otherwise, $s\ge t$. If $q=0$, then $\delta_{G^*}(S, T)=k(s-t)+\sum_{v\in T}d_{G^*-S}(v)\ge 0$, which
contradicts (\ref{equ::delta(S,T)}). If $q\ge 1$, then we have
$$\delta_{G^*}(S,T)=k(s-t)+\sum_{v\in T}d_{G^*-S}(v)-q\ge k(s-t)+\sum_{i=1}^{q}e_{G^*}(C_i, T)-q\ge 0,$$
which also contradicts (\ref{equ::delta(S,T)}).
$\Box$

{\flushleft\bf Claim 2.} $e(G^*)> \displaystyle\binom{n-k-1}{2}+k+2$.

{\flushleft\bf \emph{Proof}.} Note that $G_{n,k}\in \mathcal{G}_n^k$ and contains a $K_{n-k-1}$ as a proper subgraph. Then we have
$\rho (G^*)\ge \rho (G_{n,k})>\rho(K_{n-k-1})=n-k-2$ by Lemma \ref{edge}.
Combining this with Lemmas \ref{degree} and \ref{V.N}, we obtain
$$n-k-2<\rho(G^*)\le\frac{k-1}{2}+\sqrt{2e(G^*)-nk+\frac{(k+1)^2}{4}}.$$
The claim follows immediately.
$\Box$

{\flushleft\bf Claim 3.} If $q\ge 1$, then $c_i\ge k$, where $1\le i\le q$.

{\flushleft\bf \emph{Proof}.} Otherwise, there exists some $C_j$ such that $c_j=k-a$, where $1\le a\le k-1$ and
\begin{equation}\label{e(Cj,T)}
\begin{aligned}
e_{G^*}(C_j,T)+kc_j\equiv 1 ~(mod ~2).
\end{aligned}
\end{equation}
Then we divide the proof into the following two cases.

{\flushleft\bf Case 1.} $k$ is even.

Then we have $k(k-a)-1$ is odd and $e_{G^*}(C_j,T)$ is odd by (\ref{e(Cj,T)}). If $e_{G^*}(C_j,T)\le k(k-a)-1$.
Then let $T'=T\cup V(C_j)$ and so $|T'|=t'=t+c_j= t+k-a$. We have
\begin{equation*}
\begin{aligned}
\delta_{G^*}(S,T')&=k(s-t')+\sum_{v\in T}d_{G^*}(v)+\sum_{v\in V(C_j)}d_{G^*}(v)-\left(e_{G^*}(S,T)+e_{G^*}(S,C_j)\right)-(q-1)\\
&=k(s-t)+\sum_{v\in T}d_{G^*}(v)-e_{G^*}(S,T)-q-k(k-a)+e_{G^*}(C_j,T)+1\\
&=\delta_{G^*}(S,T)-\left(k(k-a)-1-e_{G^*}(C_j,T)\right)\\
&\le\delta_{G^*}(S,T)~~\left(\mbox{since $e_{G^*}(C_j,T)\le k(k-a)-1$}\right),
\end{aligned}
\end{equation*}
which contradicts the maximality of $S\cup T$.
If $e_{G^*}(C_j,T)\ge k(k-a)+1$.
Then let $S'=S\cup V(C_j)$ and so $|S'|=s'=s+c_j= s+k-a$. We have
\begin{equation*}
\begin{aligned}
\delta_{G^*}(S',T)&=k(s'-t)+\sum_{v\in T}d_{G^*}(v)-(e_{G^*}(S,T)+e_{G^*}(C_j,T))-(q-1)\\
&=k(s-t)+\sum_{v\in T}d_{G^*}(v)-e_{G^*}(S,T)-q+k(k-a)-e_{G^*}(C_j,T)+1\\
&=\delta_{G^*}(S,T)-(e_{G^*}(C_j,T)-k(k-a)-1)\\
&\le\delta_{G^*}(S,T)~~(\mbox{since $e_{G^*}(C_j,T)\ge k(k-a)+1$}),
\end{aligned}
\end{equation*}
which also contradicts the maximality of $S\cup T$.

{\flushleft\bf Case 2.} $k$ is odd.

If $k-a$ is odd, then $k(k-a)-1$ is even, and hence $e_{G^*}(C_j,T))$ is even by (\ref{e(Cj,T)}).
If $k-a$ is even, then $k(k-a)-1$ is odd, and hence $e_{G^*}(C_j,T))$ is odd by (\ref{e(Cj,T)}).
Therefore, we may distinguish cases based on the values of $e_{G^*}(C_j,T))$ and $k(k-a)-1$.
Using the same argument as in Case 1, we can find a larger set $T'$ or $S'$ such that
$\delta_{G^*}(S,T')\le -2$ or $\delta_{G^*}(S',T)\le -2$, which contradicts the maximality of $S\cup T$.

This completes the proof of claim 3.
$\Box$

{\flushleft\bf Claim 4.} $q\le 1$.

{\flushleft\bf \emph{Proof}.} Otherwise, $q\ge 2$. We can obtain that $c_i\ge k, 1\le i\le q$ by Claim 3 and so $n\ge qk+s+t$. Recall that $G^*[S\cup T]=K_{s,t}$
and $G^*-T\cong K_s\vee (K_{c_1}\cup \cdots\cup K_{c_q}\cup K_{c_{q+1}})$, where $c_{q+1}=n-t-s-\sum_{i=1}^{q}c_i.$ \
Then by Lemma \ref{Fan} and inequality (\ref{dG(v)}), we have
\begin{align}
e(G^*)&\le\sum_{v\in T}d_{G^*}(v)+\sum_{i=1}^{q-1}\displaystyle\binom{c_i}{2}+\displaystyle\binom{n-s-t-\sum_{i=1}^{q-1}c_i}{2}+\displaystyle\binom{s}{2}+s(n-s-t)\nonumber\\
&\le k(t\!-\!s)\!+\!st\!+\!q\!-\!2\!+\!(q\!-\!1)\displaystyle\binom{k}{2}\!+\!\displaystyle\binom{n\!-\!t\!-\!s\!-\!(q\!-\!1)k}{2}+\displaystyle\binom{s}{2}\!+\!s(n\!-\!s\!-\!t)\label{e(G)}.
\end{align}
We divide the proof into the following four cases.

{\flushleft\bf Case 1.} $1\le t\le k$ and  $q\ge3$.
Then by  Claims 1-2 and inequality (\ref{e(G)}), we have
\begin{equation*}
\begin{aligned}
0&>\displaystyle\binom{n-k-1}{2}+k+2-e(G^*)\\
&\ge(kq-2k+t-1)n+\frac{(1\!+\!q\!-\!q^2)k^2}{2} -\frac{(2q(s\!+\!t)\!-\!4s\!-\!5)k}{2}-\frac{(1\!+\!t\!+\!2s)t}{2}+5-q\\
&\ge \frac{t^2\!+\!(2kq\!-\!4k\!-\!3)t}{2}+\frac{k^2q^2\!-\!(3k^2\!+\!2k\!+\!2)q}{2}+\frac{k^2\!+\!5k}{2}+5-s ~~~(\mbox{since $n\ge qk+s+t$})\\
&\ge \frac{t^2\!+\!(2kq\!-\!4k\!-\!5)t}{2}+\frac{k^2q^2\!-\!(3k^2\!+\!2k\!+\!2)q}{2}+\frac{k^2\!+\!5k}{2}+6~~~(\mbox{since $s\le t-1$})\\
&\ge \frac{t^2\!+\!(2k\!-\!5)t}{2}+\frac{k^2\!-\!2k}{2}+3~~~(\mbox{since $q\ge 3$})\\
&\ge\frac{k^2+k+2}{2}~~~(\mbox{since $t\ge 1$})\\
&>0 ~~~(\mbox{since $k\ge 1$}),~ \mbox{a contradiction.}
\end{aligned}
\end{equation*}

{\flushleft\bf Case 2.} $1\le t\le k$ and $q=2$.

Recall that $n\ge 13k^2-11k-1$. If $t\ge 2$. Then by Claims 1-2 and inequality (\ref{e(G)}), we have
\begin{equation*}
\begin{aligned}
0&>\displaystyle\binom{n-k-1}{2}+k+2-e(G^*)\\
&\ge(t-1)n-\frac{k^2+t^2+4kt+t-5k}{2} +3- st~~~(\mbox{Since inequality (\ref{e(G)})})\\
&\ge (t-1)n-\frac{k^2+3t^2+4kt-t-5k}{2}+3 ~~~(\mbox{since $s\le t-1$})\\
&\ge n-4k^2+ \frac{5}{2}k+4~~~(\mbox{since $2\le t\le k$})\\
&>0~(\mbox{since $n\ge 13k^2-11k-1$}), ~ \mbox{a contradiction.}
\end{aligned}
\end{equation*}
Let $t=1$ and $q=2$, then $s=0$ by Claim 1. We assert that $c_i\ge k+1$ for $i=1,2$.
Otherwise, there exists some $C_i$ such that $c_i=k$ by Claim 3, where $i=1,2$.
Then we have $e_{G^*}(C_i,T)=k$ as $\delta(G^*)\ge k$. Which contradicts of
$e_{G^*}(C_i,T)+kc_i\equiv 1 ~(mod ~2)$.
Therefore, by inequality (\ref{e(G)}), we can get $e(G^*)\le k+\frac{k(k+1)}{2}+\displaystyle\binom{n-k-2}{2}$.
Combining this with Claim 2 and $n\ge 13k^2-11k-1$, we have
$$0>\displaystyle\binom{n-k-1}{2}+k+2-e(G^*)\ge n-\frac{k^2}{2}-\frac{3k}{2}>0, ~ \mbox{a contradiction.}$$
{\flushleft\bf Case 3.} $t\ge k+1$ and $q\ge3$.
\enlargethispage*{5\baselineskip}
It is not hard to see that at least $\sum_{i=1}^{q-1}\sum_{j=i+1}^qc_ic_j$ edges here are not in $G$. Without loss
of generality, assume $c_q\ge\cdots\ge c_1\ge k$. Then we have
$$\sum_{i=1}^{q-1}\sum_{j=i+1}^qc_ic_j\ge c_1((q-1)c_1+(q-2)c_1+\cdots+c_1)\ge \frac{q(q-1)k^2}{2}.$$
Combining this and inequality (\ref{dG(v)}), we have
\begin{equation*}
\begin{aligned}
e(G^*)&\le\sum_{v\in T}d_{G^*}(v)\!+\displaystyle\binom{ n\!-\!t}{2}\!-\!\sum_{i=1}^{q\!-\!1}\sum_{j=i\!+\!1}^qc_ic_j\\
&\le k(t\!-\!s)\!+\!st\!+\!q\!-\!2\!+\!\frac{(n\!-\!t)(n\!-\!t\!-\!1)}{2}\!-\!\frac{k^2(q\!-\!1)q}{2}.
\end{aligned}
\end{equation*}
Hence, by Claim 2, we obtain that
\begin{equation*}
\begin{aligned}
0&>\displaystyle\binom{n-k-1}{2}+k+2-e(G^*)\\
&\ge(t-k-1)n+\frac{k^2(q^2-q+1)-t(t+2k-1)+5k}{2}+5-q-(t-k)s\\
&\ge \frac{k^2q^2-(3k^2-2kt+2k+2)q+(t-4k-5)t+k^2+5k}{2}+6\\
&~~~~~(\mbox{since $n\ge kq+s+t$ and $s\le t-1$})\\
&\ge \frac{k^2+(2t-1)k+t^2-5t+6}{2}~~~(\mbox{since $q\ge 3$})\\
&\ge 2k^2-k+1 ~~~(\mbox{since $t\ge k+1$})\\
&>0 ~~~(\mbox{since $k\ge 1$}), ~ \mbox{a contradiction.}
\end{aligned}
\end{equation*}

{\flushleft\bf Case 4.} $t\ge k+1$ and  $q=2$.

If $t\ge k+2$, then by Claim 2 and inequality (\ref{e(G)}), we have
\begin{equation*}
\begin{aligned}
0&>\displaystyle\binom{n-k-1}{2}+k+2-e(G^*)\\
&\ge(t-1)n-\frac{t^2+k^2+4kt+t-5k}{2}+3-st~~~(\mbox{Since inequality (\ref{e(G)})})\\
&\ge \frac{t^2-k^2+k-3t}{2}+3-s~~~(\mbox{since $n\ge 2k+s+t$})\\
&\ge \frac{t^2-k^2+k-5t}{2}+4~~~(\mbox{since $s\le t-1$})\\
&\ge1~~~(\mbox{since $t\ge k+2$}), ~ \mbox{a contradiction.}
\end{aligned}
\end{equation*}
If $t= k+1$, then by Claim 2 inequality (\ref{e(G)}) we have
\begin{equation*}
\begin{aligned}
0&>\displaystyle\binom{n-k-1}{2}+k+2-e(G^*)\\
&\ge nk-3k^2-k+2- s(k+1) ~~~(\mbox{Since inequality (\ref{e(G)})})\\
&\ge nk-4k^2-2k+2~~~(\mbox{since $s\le t-1=k$})\\
&>0 ~~~(\mbox{since $n\ge 13k^2-11k-1$}),~ \mbox{a contradiction.}
\end{aligned}
\end{equation*}
Thus, it is completes the proof of claim 4.
$\Box$

{\flushleft\bf Claim 5.} $t\ge k+1$.

{\flushleft\bf \emph{Proof}.}
By Claim 4, inequality (\ref{dG(v)}) and $\delta(G)\ge k$, we have
\begin{equation*}\label{d(T)}
\begin{aligned}
kt\le \sum_{v\in T}d_{G^*}(v)\le k(t-s)+st-1,
\end{aligned}
\end{equation*}
and thus $t\ge k+\frac{1}{s}$. It is implies that $t\ge k+1$ because $t$ is a positive integer.
$\Box$

{\flushleft\bf Claim 6.} $n\ge s+t+k+1$.
{\flushleft\bf \emph{Proof}.}
Otherwise, let $n=s+t+b$, where $1\le b\le k$. Then we have $n\le 2t-1+b$ as $s\le t-1$.
Recall that  $n\ge 13k^2-11k-1$, so we get $t\ge \frac{13}{2}k^2-\frac{11}{2}k-\frac{b}{2}\ge\frac{13}{2}k^2-6k$.
Note that
\begin{equation}\label{e(G)2}
\begin{aligned}
e(G^*)\le \sum_{v\in T}d_{G^*}(v)+\displaystyle\binom{ n-t}{2}\le k(t-s)+st-1+\frac{(n-t)(n-t-1)}{2}.
\end{aligned}
\end{equation}
If $k\ge 2,$ then by Claim 2, we have
\begin{equation*}
\begin{aligned}
0&>\displaystyle\binom{n-k-1}{2}+k+2-e(G^*)\\
&\ge (t-k-1)n+\frac{k^2+5k-t^2-t}{2}+4-(t-k)s-kt\\
&=\frac{k^2+5k+t^2-3t}{2}+(t-k-1)b+4-2kt-s~~~(\mbox{since $n=s+t+b$})\\
&\ge \frac{(t-4k-5)t+k^2+5k}{2}+5~~~(\mbox{since $s\le t-1, t\ge k+1$})\\
&\ge\frac{t+k^2+5k}{2}+5~~~\left(\mbox{since $t\ge \frac{13}{2}k^2-6k, k\ge2$}  \right)\\
&>0, ~ \mbox{a contradiction.}
\end{aligned}
\end{equation*}
Let $k=1$, then $t\ge2$ and $b=1$. We have
\begin{equation*}
\begin{aligned}
0&>\displaystyle\binom{n-k-1}{2}+k+2-e(G^*)\\
&\ge(t-2)n-\frac{t^3+3t}{2}+7-(t-1)s~~~(\mbox{since $k=1$})\\
&=\frac{t^2-5t}{2}+5-s~~~(\mbox{since $n=s+t+1$})\\
&\ge\frac{(t-3)(t-4)}{2} ~~~(\mbox{since $s\le t-1$}) \\
&\ge0, ~ \mbox{a contradiction.}
\end{aligned}
\end{equation*}
Thus, the claim hold.
$\Box$

{\flushleft\bf Claim 7.} $t= k+1$.
{\flushleft\bf \emph{Proof}.}
Otherwise, $t\ge k+2$. If $t\le 3k-1$, then by Claim 2, we have
\begin{equation*}
\begin{aligned}
0&>\displaystyle\binom{n-k-1}{2}+k+2-e(G^*)\\
&\ge (t-k-1)n+\frac{k^2+5k-t^2-t}{2}+4-(t-k)s-kt ~~~(\mbox{Since inequality (\ref{e(G)2})})\\
&\ge (t-k-1)n+\frac{k^2+3k-3t^2+t}{2}+4~ ~~(\mbox{since $s\le t-1$})\\
&\ge n+\frac{k^2+3k-3(3k-1)^2+k+2}{2}+4~~~(\mbox{since $k+2\le t\le 3k-1$})\\
&= n - 13k^2 + 11k +\frac{7}{2}\\
&> 0~(\mbox{since $ n\ge13k^2-11k-1$}), ~ \mbox{a contradiction.}
\end{aligned}
\end{equation*}
If $t\ge 3k$, then by Claims 2 and 7, we have
\begin{equation*}
\begin{aligned}
0&>\displaystyle\binom{n-k-1}{2}+k+2-e(G^*)\\
&\ge (t-k-1)n+\frac{k^2+5k-t^2-t}{2}+4-(t-k)s-kt ~~~(\mbox{Since inequality (\ref{e(G)2})})\\
&\ge \frac{t^2-t-k^2+k}{2}+3-s-kt  ~~~(\mbox{since $n\ge s+t+k+1$})\\
&\ge\frac{t^2-(2k+3)t-k^2+k}{2}+4~~ ~(\mbox{since $s\le t-1$})\\
&\ge k^2 - 4k + 4~(\mbox{since $t\ge3k$})\\
&\ge 0 ~~~(\mbox{since $k\ge1$}), ~ \mbox{a contradiction.}
\end{aligned}
\end{equation*}
Thus, the claim hold.
$\Box$

By inequality (\ref{dG(v)}), Claim 2 ($s\le t-1=k$), Claim 4 ($q\le1$) and Claim 7 ($t=k+1$), we have
\begin{equation}\label{d(v)2}
\begin{aligned}
\sum_{v\in T}d_{G^*}(v)\le k(t-s)+st+q-2\le k(k+1)+s-1\le k(k+1)+k-1.
\end{aligned}
\end{equation}
Equality holds if and only if equality holds simultaneously in each of the above inequalities.
It is implies that $G^*[S\cup T]=K_{s,k+1}$, $s=k$, $\sum_{v\in T}d_{G^*-S}(v)=k-1$ and $q=1$ (i.e., $G^*-(S\cup T)=C_1\cong K_{n-2k-1}$
and $e_{G^*}(C_1,T)+kc_1\equiv 1 ~(mod ~2)$ where $c_1=n-2k-1$). We also obtain that $G^*[S\cup C_1]= K_k\vee K_{n-2k-1}=K_{n-k-1}.$
Next, we consider separately the adjacency relations between the set $T$ and $C_1$, as well as those within $T$ itself, in $G^*$.

Let $T=\{v_1,\ldots, v_{k+1}\}$, $S=\{u_1,\ldots, u_{k}\}$ and $V(C_1)=\{w_{1},\ldots, w_{n-2k-1}\}$.
Suppose that $x=(x_1,\dots,x_{n})^{T}$ is the Perron vector of $A(G^*)$.  Without loss of generality, assume that $x_{w_{1}}\ge\cdots\ge x_{w_{n-2k-1}}$.
By symmetry and eigenvalue equation $\rho(G^*) x_{u}=A(G^*)x_{u}$ for any $u\in V(G^*)$,
we can easily get that $x_{u_1}=\cdots=x_{u_k}> x_{w_1}\ge\cdots\ge x_{w_{n-2k-1}}$.
Let $d_{C_1}(v_i)=d_i~(\ge0)$, for $v_i\in T$.
Suppose that $v_i\in T$ with $d_i\ge1$, that is $N_{C_1}(v_{i})\neq \emptyset$.
Then we have $N_{C_1}(v_{i})=\{w_{1},\ldots,w_{d_i}\}$.
Otherwise, there exists a vertex $v_j \in T $ ($d_j\ge 1$) such that $v_jw_t\notin E(G^*)$ and  $v_jw_{t'}\in E(G^*)$, where $w_t,w_{t'}\in V(C_1)$ and $1 \le t \le d_{j} < t' \le n-2k-1$.
Note that $x_{w_t}\ge x_{w_{t'}}$, and let $G_1= G^*+v_jw_t-v_jw_{t'}$. Then we can obtain that $\rho(G_1) > \rho(G^*)$
by Lemma \ref{duoling}, which contradicts the maximality of $\rho(G^*)$.
Relabel the vertices in $T$ such that $d_1\ge d_2\ge\cdots\ge d_{k+1}\ge0$.
Recall that $N_{C_1}(v_{i})=\{w_{1},\ldots,w_{d_i}\}$.
Then $N_G(w_{i+1})\backslash\{w_i\}\subseteq N_G(w_i)\backslash\{w_{i+1}\}$, where $i\in [1,d_1]$.
By symmetry and eigenvalue equation $\rho(G^*) x_{w_i}=A(G^*)x_{w_i}$ for any $w_i\in V(C_1)$,
we have $x_{w_{1}}\ge \cdots\ge x_{w_{d_1}}> x_{w_{d_1+1}}=\cdots= x_{w_{n-2k-1}}\triangleq a'$.
Without loss of generality, we can assume that $x_{v^*}=\max\{x_{v_i} |v_i\in T\}$ and let $d_{C_1}(v^*)=d^*$.
Clearly, $d^*\le d_1\le k-1$ and $d_T(v^*)\le \lfloor\frac{k-1}{2}\rfloor$ as $\sum_{v\in T}d_{G^*-S}(v)=k-1$.
Then we have following claim.
{\flushleft\bf Claim 8.} $a'\ge x_{v^*}$.
{\flushleft\bf \emph{Proof}.}
By eigenvalue equation $\rho(G^*)x=A(G^*)x$, we have
\begin{equation*}\label{equ::x_i,b}
\begin{cases}
&\rho(G^*)x_{v^*}=kx_{u_1}+\sum_{i=1}^{d^*}x_{w_i}+\sum_{v_i\in N_T(v^*)}x_{v_i}\\
&\rho(G^*)a'=kx_{u_1}+\sum_{i=1}^{d_1}x_{w_i}+(n-2k-2-d_1)a'.
\end{cases}
\end{equation*}
From above equations, we have
\begin{equation*}
\begin{aligned}
\rho(G^*)(a'-x_{v^*})&=\sum_{i=d^*+1}^{d_1}x_{w_i}\!+\!(n\!-\!2k\!-\!2\!-\!d_1)a'-\!\sum_{v_i\in N_T(v^*)}x_{v_i}\\
&\ge (n\!-\!2k\!-\!2\!-\!d_1)a'\!-\!d_T(v^*)x_{v^*} ~(\mbox{since $x_{v^*}=\max\{x_{v_i} |v_i\in T\}$})\\
&= (n\!-\!2k\!-\!2\!-\!d_1\!-\!d_T(v^*))a'\!+\!d_T(v^*)(a'\!-\!x_{v^*})\\
&\ge\left(n\!-\!\frac{7k\!+\!1}{2}\right)a'\!+\!d_T(v^*)(a'\!-\!x_{v^*}) ~(\mbox{since $d_1\!\le\! k\!-\!1, d_T(v^*)\!\le\! \lfloor\frac{k\!-\!1}{2}\rfloor$}).
\end{aligned}
\end{equation*}
That is
$$(\rho(G^*)-d_T(v^*))(a'-x_{v^*})\ge \left(n-\frac{7k+1}{2}\right)a'\ge0,$$
due to $\rho(G^*)>n-k-2>d_T(v^*)$, $n\ge 13k^2-11k-1>\frac{7k+1}{2}$ when $k\ge2$ and $n\ge 4$ when $k=1$.
It is implies that $a'\ge x_{v^*}$, as required.
$\Box$

For any $v_i\in T$, by Claim 8, we have $x_{w_{1}}\ge x_{w_{d_1}} >x_{w_{d_1+1}}\ge x_{v_i}$. Note that $\sum_{v\in T}d_{G^*-S}(v)=k-1$,
so $2e_{G^*}(T)+e_{G^*}(C_1,T)= k-1$.
If $k=3$, then we have $e_{G^*}(T)=0$, $e_{G^*}(C_1,T)=2$ and thus $G^*=G^1_{n,3}$ due to $G$ is 1-tough graph.
Let $k\ge 4$, then we have $G\in \mathcal{G}_{n,k}$ by Lemmas  \ref{edge}, \ref{duoling} and the proof of Claim 8 (it is implies that $N_{C_1}(v_i)=\{w_1,\ldots,w_{d_i}\}$ for $v_i\in T$).
Therefore, $G^*=G_{n,k}$ by Lemma \ref{main lemma}, which completes the proof.
\end{proof}

\vskip 0.382 in
\noindent
\textbf{\Large Declaration of Competing Interest}

The authors declare that they have no known competing financial interests or personal
relationships that could have appeared to influence the work reported in this paper.

\vskip 0.382 in
\noindent

\section*{Acknowledgements}
This research is supported by China Postdoctoral Science Foundation (No. 2025M783100),
the National Natural Science Foundation of China (Nos. 12271162 and 12401468),
Tian shan Talent Training Program (No. 2024TSYCQNTJ0001),
the Natural Science Foundation of Xinjiang Uygur Autonomous Region, (No. 2023D01C165),
the Natural Science Foundation of Shanghai (No. 22ZR1416300)
and the Program for Professor of Special Appointment (Eastern Scholar) at Shanghai Institutions of Higher Learning (No. TP2022031).

\end{document}